\newtheorem{theorem}{Theorem}[section]
\newtheorem{corollary}[theorem]{Corollary}
\theoremstyle{definition}
\newtheorem{definition}[theorem]{Definition}
\newtheorem{example}[theorem]{Example}
\theoremstyle{remark}
\newtheorem{remark}[theorem]{Remark}
\numberwithin{equation}{section}
\begin{document}

\title[Generalized fractional calculus with
applications]{Generalized fractional calculus with
applications to the calculus of variations}

\thanks{Submitted 22-Dec-2011;
revised 26-Jan-2012; accepted 27-Jan-2012;
for publication in \emph{Computers and Mathematics with Applications}.}

\thanks{Part of the first author's Ph.D., which is carried out
at the University of Aveiro under the Doctoral Programme
\emph{Mathematics and Applications}
of Universities of Aveiro and Minho.}


\author[T. Odzijewicz]{Tatiana Odzijewicz}

\address{Center for Research and Development in Mathematics and Applications\newline
\indent Department of Mathematics, University of Aveiro, 3810-193 Aveiro, Portugal}

\email{tatianao@ua.pt}


\author[A. B. Malinowska]{Agnieszka B. Malinowska}

\address{Faculty of Computer Science, Bia{\l}ystok University of Technology\newline
\indent 15-351 Bia\l ystok, Poland}

\email{a.malinowska@pb.edu.pl}


\author[D. F. M. Torres]{Delfim F. M. Torres}

\address{Center for Research and Development in Mathematics and Applications\newline
\indent Department of Mathematics, University of Aveiro, 3810-193 Aveiro, Portugal}

\email{delfim@ua.pt}


\subjclass[2010]{Primary: 26A33, 34A08; Secondary: 49K05, 49K21}

\date{}


\begin{abstract}
We study operators that are generalizations
of the classical Riemann--Liouville fractional integral,
and of the Riemann--Liouville and Caputo fractional derivatives.
A useful formula relating the generalized fractional derivatives
is proved, as well as three relations of fractional integration by parts
that change the parameter set of the given operator into its dual.
Such results are explored in the context of dynamic optimization,
by considering problems of the calculus of variations
with general fractional operators.
Necessary optimality conditions of Euler--Lagrange type
and natural boundary conditions for unconstrained and constrained
problems are investigated. Interesting results are obtained
even in the particular case when the generalized operators
are reduced to be the standard fractional derivatives
in the sense of Riemann--Liouville or Caputo.
As an application we provide a class of variational problems
with an arbitrary kernel that give answer
to the important coherence embedding problem.
Illustrative optimization problems are considered.
\end{abstract}


\keywords{Fractional operators;
calculus of variations;
generalized fractional calculus;
integration by parts;
necessary optimality conditions;
coherent embedding.}


\maketitle


\section{Introduction}

Fractional calculus studies
derivatives (and integrals) of non-integer order.
It is a classical mathematical field
as old as calculus itself \cite{book:Kilbas}.
During almost 300 years, fractional calculus was considered
as pure mathematics, with nearly no applications. In recent years,
however, the situation changed dramatically,
with fractional calculus becoming an
interesting and useful topic among engineers
and applied scientists, and an excellent tool
for description of memory and heredity effects \cite{TM}.

One of the earliest applications
of fractional calculus was to construct
a complete mechanical description of nonconservative systems,
including Lagrangian and Hamiltonian mechanics
\cite{CD:Riewe:1996,CD:Riewe:1997}. Riewe's results
\cite{CD:Riewe:1996,CD:Riewe:1997}
mark the beginning of the fractional calculus of variations
and are of upmost importance: nonconservative and dissipative
processes are widespread in the physical world.
Fractional calculus provide the necessary tools
to apply variational principles to systems
characterized by friction or other dissipative
forces, being even possible to deduce fractional conservation laws
along the nonconservative extremals \cite{CD:FredericoTorres:2007}.

The theory of the calculus of variations with fractional
derivatives is nowadays under strong current development,
and the literature is already vast. We do not try
to make here a review.
Roughly speaking, available results in the literature
use different notions of fractional derivatives,
in the sense of Riemann--Liouville
\cite{LeitmannMethod,gastao,MyID:181},
Caputo \cite{Caputo,comBasia:Frac1,comDorota},
Riesz \cite{Agrawal2007,withGasta:SI:Leit:85,Rabei:et:al},
combined fractional derivatives \cite{Klimek02,Klimek05,Basia:Spain2010},
or modified/generalized versions of the classical fractional operators
\cite{MyID:182,Cresson,Jumarie,BasiaRachid,MyID:207},
in order to describe different variational principles.
Here we develop a more general perspective to the subject,
by considering three fractional operators that depend
on a general kernel. By choosing special cases for the kernel,
one obtains the standard fractional operators and previous results
in the literature. More important,
the general approach here considered
brings new insights and give answers to some important questions.

The text is organized as follows. In Section~\ref{sec:prelim}
the generalized fractional operators $K_P^\alpha$,
$A_P^\alpha$ and $B_P^\alpha$ are introduced
and basic results given. The main contributions of the paper
appear in Section~\ref{sec:mr}: we prove a useful relation
between $A_P^\alpha$ and $B_P^\alpha$ (Theorem~\ref{thm:rel}),
several formulas of integration by parts that change the parameter
set $P$ into its dual $P^*$ (Theorems~\ref{thm:gfip:Kop},
\ref{thm:IPL1} and \ref{thm:gfip}),
and new fractional necessary optimality conditions
for generalized variational problems with mixed
integer and fractional order derivatives and integrals
(Theorems~\ref{theorem:ELCaputo}, \ref{thm:tc:a} and \ref{theorem:EL2}).
We see that even for an optimization problem that does not depend on
generalized Riemann--Liouville fractional derivatives, such derivatives appear
naturally in the necessary optimality conditions. This
is connected with duality of operators
in the formulas of integration by parts
and explains no-coherence of the fractional embedding \cite{Cresson}.
This is addressed in Section~\ref{sec:coh}, where we give an answer
to the important question of coherence, by providing a class of
fractional variational problems that does not depend on the kernel,
for which the embedded Euler--Lagrange equation coincides with the one
obtained by the least action principle (Theorem~\ref{thm:coherent}).
Finally, some concrete examples of optimization problems
are discussed in Section~\ref{sec:ex}.


\section{Basic notions}
\label{sec:prelim}

Throughout the text, $\alpha$ denotes a positive
real number between zero and one,
and $\partial_{i} F$ the partial
derivative of a function $F$
with respect to its $i$th argument.
We consider the generalized fractional operators
$K_P^\alpha$, $A_P^\alpha$ and $B_P^\alpha$
as denoted in \cite{OmPrakashAgrawal}.
The study of generalized fractional operators
and their applications has a long and rich history.
We refer the reader to the book \cite{book:Kiryakova}.

\begin{definition}[Generalized fractional integral]
The operator $K_P^\alpha$ is given by
\begin{equation*}
K_P^{\alpha}f(t)=p\int\limits_{a}^{t}k_{\alpha}(t,\tau)f(\tau)d\tau
+q\int\limits_{t}^{b}k_{\alpha}(\tau,t)f(\tau)d\tau,
\end{equation*}
where $P=\langle a,t,b,p,q\rangle$ is the \emph{parameter set} ($p$-set for brevity),
$t\in[a,b]$, $p,q$ are real numbers, and $k_{\alpha}(t,\tau)$
is a kernel which may depend on $\alpha$.
The operator $K_P^\alpha$ is referred as the \emph{operator $K$}
($K$-op for simplicity) of order $\alpha$ and $p$-set $P$.
\end{definition}

\begin{theorem}
Let $\alpha\in(0,1)$ and $P=\langle a,t,b,p,q \rangle$. If $k_\alpha(\cdot,\cdot)$
is a square-integrable function on $\Delta=[a,b]\times[a,b]$, then
$K_P^{\alpha}:L_2\left([a,b]\right)\rightarrow L_2\left([a,b]\right)$
is a well defined bounded linear operator.
\end{theorem}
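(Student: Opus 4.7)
The plan is to recognize $K_P^\alpha$ as a sum of two integral operators whose kernels, after truncation to a triangular region of $\Delta$, still belong to $L_2(\Delta)$, and then invoke the standard Hilbert--Schmidt--type estimate for integral operators with $L_2$ kernels. Linearity in $f$ is immediate from linearity of the Lebesgue integral, so only well-definedness and boundedness need to be established.

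First I would split $K_P^\alpha = p\,T_1 + q\,T_2$, where
\[
T_1 f(t) = \int_a^b k^{(1)}(t,\tau)\,f(\tau)\,d\tau, \qquad T_2 f(t) = \int_a^b k^{(2)}(t,\tau)\,f(\tau)\,d\tau,
\]
with $k^{(1)}(t,\tau) := k_\alpha(t,\tau)\,\chi_{\{\tau \le t\}}$ and $k^{(2)}(t,\tau) := k_\alpha(\tau,t)\,\chi_{\{\tau \ge t\}}$. Since $k_\alpha \in L_2(\Delta)$ and the two triangular subsets of $\Delta$ are measurable, $k^{(1)}$ lies in $L_2(\Delta)$ with $\|k^{(1)}\|_{L_2(\Delta)} \le \|k_\alpha\|_{L_2(\Delta)}$, and by the symmetry $(t,\tau)\mapsto(\tau,t)$ together with Fubini the same bound holds for $k^{(2)}$.

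Next, for any $f\in L_2([a,b])$ I would apply Cauchy--Schwarz in the $\tau$-variable to obtain
\[
|T_i f(t)|^2 \le \|f\|_{L_2}^2 \int_a^b |k^{(i)}(t,\tau)|^2\,d\tau \qquad (i=1,2),
\]
which is finite for almost every $t$ by Tonelli, so $T_i f$ is well defined a.e. Integrating in $t$ and applying Tonelli once more yields $\|T_i f\|_{L_2} \le \|k^{(i)}\|_{L_2(\Delta)}\,\|f\|_{L_2}$. The triangle inequality then produces
\[
\|K_P^\alpha f\|_{L_2} \le (|p|+|q|)\,\|k_\alpha\|_{L_2(\Delta)}\,\|f\|_{L_2},
\]
which simultaneously proves well-definedness and boundedness. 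The only delicate point is the measurability of the truncated kernels and the justification of Fubini/Tonelli; this is routine since the sets $\{(t,\tau)\in\Delta : \tau\le t\}$ and $\{(t,\tau)\in\Delta : \tau\ge t\}$ are closed, and restrictions of a square-integrable function to measurable subsets remain square-integrable. I expect no deeper tool than this standard Hilbert--Schmidt argument to be needed.
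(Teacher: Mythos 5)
Your proof is correct and follows essentially the same route as the paper: the paper combines your two truncated kernels into a single kernel $G(t,\tau)$ equal to $p\,k_\alpha(t,\tau)$ for $\tau<t$ and $q\,k_\alpha(\tau,t)$ for $\tau\geq t$, and then applies the same Cauchy--Schwarz/Fubini (Hilbert--Schmidt) estimate to get $\|K_P^\alpha\|_2 \leq \bigl(\int_a^b\int_a^b |G(t,\tau)|^2\,d\tau\,dt\bigr)^{1/2}$. Splitting into $pT_1+qT_2$ and using the triangle inequality, as you do, is only a cosmetic variation yielding the slightly cruder constant $(|p|+|q|)\|k_\alpha\|_{L_2(\Delta)}$.
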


\begin{proof}
Let $\alpha\in(0,1)$ and $P=\langle a,t,b,p,q\rangle$. Define
\[
G(t,\tau):=
\left\{ \begin{array}{ll}
p k_\alpha(t,\tau) & \mbox{if $\tau < t$},\\
q k_\alpha(\tau,t) & \mbox{if $\tau \geq t$.}
\end{array} \right.
\]
For all $f\in L_2\left([a,b]\right)$ one has
$K_P^{\alpha} f(t)=\int_a^bG(t,\tau)f(\tau)d\tau$
with $G(t,\tau)\in L_2\left(\Delta\right)$. It is not difficult to see that
$K_P^\alpha$ is linear and $K_P^{\alpha} f\in L_2\left([a,b]\right)$ for all
$f\in L_2\left([a,b]\right)$. Moreover, applying the Cauchy--Schwarz inequality
and Fubini's theorem, we obtain
\begin{equation*}
\begin{split}
\left\|K_P^{\alpha} f\right\|_2^2
&=\int_a^b\left|\int_a^bG(t,\tau)f(\tau)d\tau\right|^2dt\\
&\leq\int_a^b\left[\left(\int_a^b\left|G(t,\tau)\right|^2
d\tau\right)\left(\int_a^b\left|f(\tau)\right|^2d\tau\right)\right]dt\\
&=\left\|f\right\|_2^2\int_a^b\int_a^b\left|G(t,\tau)\right|^2d\tau dt.
\end{split}
\end{equation*}
For $f\in L_2\left([a,b]\right)$ such that $\left\|f\right\|_2\leq 1$ we have
$\left\|K_P^{\alpha} f \right\|_2\leq\left(\int_a^b
\int_a^b\left|G(t,\tau)\right|^2d\tau dt\right)^{\frac{1}{2}}$.
Therefore, $\left\|K_P^{\alpha} \right\|_2\leq\left(\int_a^b\int_a^b
\left|G(t,\tau)\right|^2d\tau dt\right)^{\frac{1}{2}}$.
\end{proof}

\begin{theorem}
\label{theorem:L1}
Let $k_\alpha$ be a difference kernel, \textrm{i.e.},
$k_\alpha(t,\tau)=k_\alpha(t-\tau)$ and $k_\alpha\in L_1\left([a,b]\right)$.
Then $K_P^{\alpha}:L_1\left([a,b]\right)\rightarrow L_1\left([a,b]\right)$
is a well defined bounded linear operator.
\end{theorem}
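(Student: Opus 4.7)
The plan is to mimic the structure of the proof of the previous theorem, but work in $L_1$ rather than in $L_2$, using Fubini--Tonelli together with the translation invariance of Lebesgue measure to reduce the bound on $\|K_P^{\alpha}f\|_1$ to a single factor of $\|k_\alpha\|_1\|f\|_1$.

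First I would set, as before,
\[
G(t,\tau):=\begin{cases} p\,k_\alpha(t-\tau) & \text{if } \tau<t,\\ q\,k_\alpha(\tau-t) & \text{if } \tau\ge t,\end{cases}
\]
so that $K_P^{\alpha}f(t)=\int_a^b G(t,\tau)f(\tau)\,d\tau$, and note that linearity of $K_P^{\alpha}$ is immediate from this representation. Measurability of $K_P^{\alpha}f$ on $[a,b]$ follows once Tonelli's theorem guarantees that the iterated integral $\int_a^b|G(t,\tau)f(\tau)|\,d\tau$ is finite for a.e.~$t$.

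Next I would estimate
\[
\|K_P^{\alpha}f\|_1\le |p|\int_a^b\!\!\int_a^t|k_\alpha(t-\tau)|\,|f(\tau)|\,d\tau\,dt
+|q|\int_a^b\!\!\int_t^b|k_\alpha(\tau-t)|\,|f(\tau)|\,d\tau\,dt,
\]
and apply Tonelli to swap the order of integration in each term (the integrands are nonnegative and jointly measurable). For the first term one gets $\int_a^b|f(\tau)|\bigl(\int_\tau^b|k_\alpha(t-\tau)|\,dt\bigr)d\tau$, and the substitution $s=t-\tau$ bounds the inner integral by $\int_0^{b-a}|k_\alpha(s)|\,ds\le\|k_\alpha\|_1$. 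The second term is handled identically via $s=\tau-t$.

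Combining these two estimates yields $\|K_P^{\alpha}f\|_1\le (|p|+|q|)\,\|k_\alpha\|_1\,\|f\|_1$, proving both that $K_P^{\alpha}f\in L_1([a,b])$ and that $\|K_P^{\alpha}\|\le(|p|+|q|)\|k_\alpha\|_1$. The only delicate point is justifying the application of Fubini--Tonelli on the triangular regions of integration; this is routine because the integrands are nonnegative measurable functions of $(t,\tau)$, so Tonelli applies without a finiteness hypothesis, and finiteness of the resulting iterated integral then legitimizes Fubini a posteriori. Everything else is a direct calculation.
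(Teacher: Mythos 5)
Your proof is correct and follows essentially the same route as the paper's: both define the combined kernel on the square $[a,b]\times[a,b]$, apply Tonelli/Fubini to the nonnegative integrand over the triangular regions, and use the translation invariance of the difference kernel to bound each inner integral by $\left\|k_\alpha\right\|_1$. Your explicit constant $(|p|+|q|)\left\|k_\alpha\right\|_1$ is in fact the correct form of the bound that the paper writes (with a sign slip) as $(p-q)\left\|k_\alpha\right\|$.
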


\begin{proof}
Obviously, the operator is linear. Let $\alpha\in(0,1)$,
$P=\langle a,t,b,p,q\rangle$, and $f\in L_1\left([a,b]\right)$. Define
\[
F(\tau,t):=
\left\{
\begin{array}{ll}
p\left|k_\alpha(t-\tau)\right|\cdot \left|f(\tau)\right| & \mbox{if $\tau \leq t$}\\
q\left|k_\alpha(\tau-t)\right|\cdot \left|f(\tau)\right| & \mbox{if $\tau > t$}
\end{array} \right.
\]
for all $(\tau,t)\in\Delta=[a,b]\times[a,b]$.
Since $F$ is measurable on the square $\Delta$ we have
\begin{equation*}
\begin{split}
\int_a^b \left(\int_a^b F(\tau,t)dt\right)d\tau
&=\int_a^b\left[\left|f(\tau)\right|\left(\int_{\tau}^bp\left|k_\alpha(t-\tau)\right|dt
+\int_{a}^{\tau}q\left|k_\alpha(\tau-t)\right|dt\right)\right]d\tau\\
&\leq\int_a^b\left|f(\tau)\right|(p-q)\left\|k_\alpha\right\|d\tau\\
&=(p-q)\left\|k_\alpha\right\|\cdot\left\|f\right\|.
\end{split}
\end{equation*}
It follows from Fubini's theorem that $F$
is integrable on the square $\Delta$. Moreover,
\begin{equation*}
\begin{split}
\left\|K_P^\alpha f\right\|
&=\int_a^b\left|p\int_{a}^{t}k_{\alpha}(t-\tau)f(\tau)d\tau
+q\int_{t}^{b}k_{\alpha}(\tau-t)f(\tau)d\tau\right|dt\\
&\leq\int_a^b\left(p\int_{a}^{t}\left|
k_{\alpha}(t-\tau)\right|\cdot\left|f(\tau)\right|d\tau
+q\int_{t}^{b}\left|k_{\alpha}(\tau-t)\right|
\cdot\left|f(\tau)\right|d\tau\right)dt\\
&=\int_a^b\left(\int_a^b F(\tau,t)d\tau\right)dt\\
&\leq (p-q)\left\|k_\alpha\right\|\cdot\left\|f\right\|.
\end{split}
\end{equation*}
Hence, $K_P^{\alpha}:L_1\left([a,b]\right)\rightarrow
L_1\left([a,b]\right)$ and $\left\|K_P^\alpha
\right\|\leq(p-q)\left\|k_\alpha\right\|$.
\end{proof}

\begin{theorem}
\label{theorem:exist}
Let $k_{1-\alpha}$ be a difference kernel, \textrm{i.e.},
$k_{1-\alpha}(t,\tau)=k_{1-\alpha}(t-\tau)$ and $k_{1-\alpha}\in L_1\left([a,b]\right)$.
If $f\in AC\left([a,b]\right)$, then the $K$-op of order $1-\alpha$
and $p$-set $P=\langle a,t,b,p,q\rangle$, \textrm{i.e.},
\begin{equation*}
K_P^{1-\alpha}f(t)
=p\int\limits_{a}^{t}k_{1-\alpha}(t-\tau)f(\tau)d\tau
+q\int\limits_{t}^{b}k_{1-\alpha}(\tau-t)f(\tau)d\tau,
\end{equation*}
belongs to $AC\left([a,b]\right)$.
\end{theorem}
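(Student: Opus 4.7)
I would split $K_P^{1-\alpha}f = I_1 + I_2$, where $I_1(t)=p\int_a^t k_{1-\alpha}(t-\tau)f(\tau)\,d\tau$ and $I_2(t)=q\int_t^b k_{1-\alpha}(\tau-t)f(\tau)\,d\tau$, and show that each summand is absolutely continuous. Set $K(s):=\int_0^s k_{1-\alpha}(u)\,du$, which is AC on $[0,b-a]$ because it is the indefinite integral of an $L_1$ function; by composition with $t\mapsto t-a$ the map $t\mapsto K(t-a)$ is AC on $[a,b]$.

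For $I_1$, since $f\in AC([a,b])$ I would insert the representation $f(\tau)=f(a)+\int_a^\tau f'(s)\,ds$ and apply Fubini (legitimate because $k_{1-\alpha}\in L_1$ and $f'\in L_1$, so the integrand is integrable on the triangle $\{a\le s\le\tau\le t\}$). This gives
\begin{equation*}
I_1(t)=p\,f(a)\,K(t-a)+p\int_a^t K(t-s)f'(s)\,ds.
\end{equation*}
The first summand is AC by the preceding remark. For the second, I would unfold $K(t-s)=\int_0^{t-s}k_{1-\alpha}(u)\,du$ and, with the change of variables $v=u+s$, apply Fubini a second time to rewrite
\begin{equation*}
\int_a^t K(t-s)f'(s)\,ds=\int_a^t h(v)\,dv,\qquad h(v):=\int_a^v k_{1-\alpha}(v-s)f'(s)\,ds.
\end{equation*}
Since $f'\in L_1([a,b])$, Theorem~\ref{theorem:L1} (applied with the $p$-set $\langle a,v,b,1,0\rangle$) guarantees $h\in L_1([a,b])$, so the right-hand side is the indefinite integral of an $L_1$ function, hence AC. Therefore $I_1\in AC([a,b])$.

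For $I_2$ I would avoid repeating the computation by using the reflection $\tau\mapsto a+b-\tau$. Setting $\tilde f(\sigma):=f(a+b-\sigma)$ and $\tilde t:=a+b-t$, a direct substitution in $I_2(t)$ yields
\begin{equation*}
I_2(t)=q\int_a^{\tilde t} k_{1-\alpha}(\tilde t-\sigma)\tilde f(\sigma)\,d\sigma,
\end{equation*}
which is exactly the same shape as $I_1$ evaluated at $\tilde t$, with $\tilde f\in AC([a,b])$ (reflections preserve absolute continuity). By the argument already carried out, this is AC in $\tilde t$, and since $t\mapsto\tilde t=a+b-t$ is AC, the composition is AC in $t$. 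Summing the two pieces gives $K_P^{1-\alpha}f\in AC([a,b])$.

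The main obstacle I anticipate is the bookkeeping in the two Fubini swaps needed to reduce $\int_a^t K(t-s)f'(s)\,ds$ to an honest indefinite integral over $t$; the integrability hypotheses are already tailored for this (the $L_1$ bound of Theorem~\ref{theorem:L1} is precisely what closes the argument), but one has to keep the triangular regions and the substitutions straight.
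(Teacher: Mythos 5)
Your proposal is correct and follows essentially the same route as the paper: both decompose the operator into its left and right parts, insert the representation $f(\tau)=f(a)+\int_a^\tau f'(s)\,ds$, and use Fubini to exhibit the convolution term as the indefinite integral of an $L_1$ function (your $h$ is exactly the paper's $h$ up to a change of variable). The only cosmetic differences are that you dispatch the right-hand piece by an explicit reflection rather than by ``analogous argument,'' and you cite Theorem~\ref{theorem:L1} to justify $h\in L_1$, a point the paper leaves implicit.
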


\begin{proof}
Let $P_1=\langle a,t,b,p,0\rangle$ and $P_2=\langle a,t,b,0,q\rangle$. Then,
$K_P^{1-\alpha}=K_{P_1}^{1-\alpha}+K_{P_2}^{1-\alpha}$.
First we show that $K_{P_1}^{1-\alpha}f \in AC\left([a,b]\right)$.
The condition $f\in AC\left([a,b]\right)$ implies
\begin{equation*}
f(x)=\int_a^x g(t)dt+f(a), \textnormal{ where } g\in L_1\left([a,b]\right).
\end{equation*}
Let $s=x-a$ and
$$
h(s)=\int_0^s k_{1-\alpha}(\tau)g(s+a-\tau)d\tau.
$$
Integrating,
$$
\int_0^s h(\theta)d\theta
=\int_0^s d\theta \int_0^{\theta}k_{1-\alpha}(\tau)g(\theta+a-\tau)d\tau,
$$
and changing the order of integration we obtain
$$
\int_0^s h(\theta)d\theta =\int_0^s d\tau \int_\tau^s k_{1-\alpha}(\tau)g(\theta+a-\tau)d\theta
=\int_0^s k_{1-\alpha}(\tau) d\tau \int_\tau^s g(\theta+a-\tau)d\theta.
$$
Putting $\xi=\theta+a-\tau$ and $d\xi=d\theta$, we have
$$
\int_0^s h(\theta)d\theta=\int_0^s k_{1-\alpha}(\tau) d\tau \int_a^{x-\tau} g(\xi)d\xi.
$$
Because $\displaystyle \int_a^{x-\tau} g(\xi)d\xi=f(x-\tau)-f(a)$, the following equality holds:
$$
\int_0^s h(\theta)d\theta=\int_0^s k_{1-\alpha}(\tau)f(x-\tau) d\tau
-f(a)\int_0^s k_{1-\alpha}(\tau) d\tau,
$$
that is,
$$
\int_0^s k_{1-\alpha}(\tau)f(x-\tau) d\tau
=\int_0^s h(\theta)d\theta+f(a)\int_0^s k_{1-\alpha}(\tau) d\tau.
$$
Both functions on the right-hand side of the equality belong to $AC\left([a,b]\right)$.
Hence,
$$
\int_0^s k_{1-\alpha}(\tau)f(x-\tau) d\tau \in AC\left([a,b]\right).
$$
Substituting $t=x-\tau$ and $dt=-d\tau$, we get
$$
\int_a^x k_{1-\alpha}(x-t)f(t) dt \in AC\left([a,b]\right).
$$
This means that $K_{P_1}^{1-\alpha}f \in AC\left([a,b]\right)$.
The proof that $K_{P_2}^{1-\alpha}f \in AC\left([a,b]\right)$
is analogous, and since the sum of two absolutely continuous functions
is absolutely continuous, it follows that
$K_{P}^{1-\alpha}f \in AC\left([a,b]\right)$.
\end{proof}

\begin{remark}
The $K$-op reduces to the classical left or right Riemann--Liouville
fractional integral (see, \textrm{e.g.}, \cite{book:Kilbas,book:Podlubny})
for a suitably chosen kernel
$k_{\alpha}(t,\tau)$ and $p$-set $P$. Indeed,
let $k_{\alpha}(t-\tau)=\frac{1}{\Gamma(\alpha)}(t-\tau)^{\alpha-1}$.
If $P=\langle a,t,b,1,0\rangle$, then
\begin{equation}
\label{eq:class:left:int}
K_{P}^{\alpha}f(t)=\frac{1}{\Gamma(\alpha)}
\int\limits_a^t(t-\tau)^{\alpha-1}f(\tau)d\tau
=: {_{a}}\textsl{I}^{\alpha}_{t} f(t)
\end{equation}
is the left Riemann--Liouville fractional integral
of order $\alpha$; if $P=\langle a,t,b,0,1\rangle$, then
\begin{equation}
\label{eq:class:right:int}
K_{P}^{\alpha}f(t)=\frac{1}{\Gamma(\alpha)}
\int\limits_t^b(\tau-t)^{\alpha-1}f(\tau)d\tau
=: {_{t}}\textsl{I}^{\alpha}_{b} f(t)
\end{equation}
is the right Riemann--Liouville fractional integral
of order $\alpha$. Theorem~\ref{theorem:L1}
with $k_{\alpha}(t-\tau)=\frac{1}{\Gamma(\alpha)}(t-\tau)^{\alpha-1}$
asserts the well-known fact that the
Riemann--Liouville fractional integrals
${_{a}}\textsl{I}^{\alpha}_{t}$,
${_{t}}\textsl{I}^{\alpha}_{b}:
L_1\left([a,b]\right)\rightarrow
L_1\left([a,b]\right)$
given by \eqref{eq:class:left:int}
and \eqref{eq:class:right:int}
are well defined bounded linear operators.
\end{remark}

The fractional derivatives $A_P^\alpha$
and $B_P^\alpha$ are defined with the help of
the generalized fractional integral $K$-op.

\begin{definition}[Generalized Riemann--Liouville fractional derivative]
\label{def:GRL}
Let $P$ be a given parameter set. The operator $A_P^\alpha$,
$0 < \alpha < 1$, is defined by $A_P^\alpha = D \circ K_P^{1-\alpha}$,
where $D$ denotes the standard derivative.
We refer to $A_P^\alpha$ as \emph{operator $A$} ($A$-op)
of order $\alpha$ and $p$-set $P$.
\end{definition}

A different fractional derivative is obtained by interchanging the order
of the operators in the composition that defines $A_P^\alpha$.

\begin{definition}[Generalized Caputo fractional derivative]
\label{def:GC}
Let $P$ be a given parameter set. The operator $B_P^\alpha$,
$\alpha \in (0,1)$, is defined by $B_P^\alpha =K_P^{1-\alpha} \circ D$
and is referred as the \emph{operator $B$} ($B$-op)
of order $\alpha$ and $p$-set $P$.
\end{definition}

\begin{remark}
The operator $B_P^\alpha$ is defined
for absolute continuous functions
$f\in AC\left([a,b]\right)$,
while the operator $A_P^\alpha$ acts on the bigger class
of functions $f$ such that
$K_P^{1-\alpha} f\in AC\left([a,b]\right)$.
\end{remark}

\begin{remark}
The standard Riemann--Liouville and Caputo fractional derivatives
(see, \textrm{e.g.}, \cite{book:Kilbas,book:Podlubny}) are easily obtained
from the generalized operators $A_P^\alpha $ and $B_P^\alpha$, respectively.
Let $k_{1-\alpha}(t-\tau)=\frac{1}{\Gamma(1-\alpha)}(t-\tau)^{-\alpha}$,
$\alpha \in (0,1)$. If $P=\langle a,t,b,1,0\rangle$, then
\begin{equation*}
A_{P}^\alpha f(t)=\frac{1}{\Gamma(1-\alpha)}
\frac{d}{dt} \int\limits_a^t(t-\tau)^{-\alpha}f(\tau)d\tau
=: {_{a}}\textsl{D}^{\alpha}_{t} f(t)
\end{equation*}
is the standard left Riemann--Liouville fractional derivative
of order $\alpha$ while
\begin{equation*}
B_{P}^\alpha f(t)=\frac{1}{\Gamma(1-\alpha)}
\int\limits_a^t(t-\tau)^{-\alpha} f'(\tau)d\tau
=: {^{C}_{a}}\textsl{D}^{\alpha}_{t} f(t)
\end{equation*}
is the standard left Caputo fractional derivative of order $\alpha$;
if $P=\langle a,t,b,0,1\rangle$, then
\begin{equation*}
- A_{P}^\alpha f(t)
=- \frac{1}{\Gamma(1-\alpha)} \frac{d}{dt}
\int\limits_t^b(\tau-t)^{-\alpha}f(\tau)d\tau
=: {_{t}}\textsl{D}^{\alpha}_{b} f(t)
\end{equation*}
is the standard right Riemann--Liouville
fractional derivative of order $\alpha$ while
\begin{equation*}
- B_{P}^\alpha f(t) = - \frac{1}{\Gamma(1-\alpha)}
\int\limits_t^b(\tau-t)^{-\alpha} f'(\tau)d\tau
=: {^{C}_{t}}\textsl{D}^{\alpha}_{b} f(t)
\end{equation*}
is the standard right Caputo fractional derivative of order $\alpha$.
\end{remark}


\section{Main results}
\label{sec:mr}

We begin by proving in Section~\ref{sec:rel}
that for a certain class of kernels there exists
a direct relation between the fractional derivatives
$A_P^\alpha$ and $B_P^\alpha$ (Theorem~\ref{thm:rel}).
Section~\ref{sec:GFIP} gives integration by parts formulas
for the generalized fractional setting
(Theorems~\ref{thm:gfip:Kop}, \ref{thm:IPL1} and \ref{thm:gfip}).
Section~\ref{sub:sec:new:EL:Cp} is devoted to
variational problems with generalized fractional-order operators.
New results include necessary optimality conditions of
Euler--Lagrange type for unconstrained (Theorem~\ref{theorem:ELCaputo})
and constrained problems (Theorem~\ref{theorem:EL2}),
and a general transversality condition (Theorem~\ref{thm:tc:a}).
Interesting results are obtained as particular cases.
Finally, in Section~\ref{sec:coh} we provide a class
of generalized fractional problems of the calculus of variations
for which one has a coherent embedding,
compatible with the least action principle (Theorem~\ref{thm:coherent}).
This provides a general answer to an open question posed in \cite{Cresson}.


\subsection{A relation between operators $A$ and $B$}
\label{sec:rel}

Next theorem gives a useful relation
between $A$-op and $B$-op. In the calculus of variations,
equality \eqref{eq:relAB}
can be used to provide a necessary optimality
condition involving the same operators
as in the data of the optimization problem
(\textrm{cf.} Remark~\ref{rem:AintoB}
of Section~\ref{sub:sec:new:EL:Cp}).

\begin{theorem}
\label{thm:rel}
Let $0<\alpha<1$, $P=\langle a,t,b,p,q\rangle$,
and $y\in AC\left([a,b]\right)$.
If kernel $k_{1-\alpha}$ is integrable
and there exist functions $f$ and $g$ such that
\begin{equation}
\label{eq:m:h}
\int_a^t k_{1-\alpha}(\theta,\tau) d\theta
+ \int_a^\tau k_{1-\alpha}(t,\theta) d\theta = g(t)+f(\tau)
\end{equation}
for all $t,\tau \in[a,b]$,
then the following relation holds:
\begin{equation}
\label{eq:relAB}
A_P^\alpha y(t)
=p y(a)k_{1-\alpha}(t,a)-q y(b)k_{1-\alpha}(b,t)
+B_P^\alpha y(t)
\end{equation}
for all $t\in[a,b]$.
\end{theorem}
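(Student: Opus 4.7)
The plan is to compute $A_P^\alpha y(t) = \frac{d}{dt}\,K_P^{1-\alpha}y(t)$ by first rewriting $K_P^{1-\alpha}y(t)$ in a form where the $t$-derivative can be taken without differentiating the kernel $k_{1-\alpha}$ in either of its arguments. Since $y\in AC([a,b])$, I would substitute $y(\tau) = y(a) + \int_a^\tau y'(\sigma)\,d\sigma$ in the $p$-summand of $K_P^{1-\alpha}y(t)$ and $y(\tau) = y(b) - \int_\tau^b y'(\sigma)\,d\sigma$ in the $q$-summand. Fubini's theorem (justified by $k_{1-\alpha}\in L^1$) then reorganizes $K_P^{1-\alpha}y(t)$ as a sum of the two boundary terms $p\,y(a)\int_a^t k_{1-\alpha}(t,\tau)\,d\tau$ and $q\,y(b)\int_t^b k_{1-\alpha}(\tau,t)\,d\tau$, plus two $y'$-weighted integrals whose weights are the partial kernel integrals $J_1(t,s) = \int_s^t k_{1-\alpha}(t,\tau)\,d\tau$ (over $s\in[a,t]$) and $J_2(t,s) = \int_t^s k_{1-\alpha}(\tau,t)\,d\tau$ (over $s\in[t,b]$).

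Condition \eqref{eq:m:h} is then the essential tool. Write $\Phi(u,v) := \int_a^u k_{1-\alpha}(\theta,v)\,d\theta$ and $\Psi(u,v) := \int_a^v k_{1-\alpha}(u,\theta)\,d\theta$, so that \eqref{eq:m:h} becomes $\Phi(u,v) + \Psi(u,v) = g(u) + f(v)$. Specializing at $v=a$ gives $\Phi(t,a) = g(t) + f(a)$, whence $k_{1-\alpha}(t,a) = g'(t)$ by $t$-differentiation; evaluation at $(b,t)$ similarly produces an explicit identification of $k_{1-\alpha}(b,t)$. Applying \eqref{eq:m:h} at the argument pairs $(t,t)$, $(t,s)$, $(s,t)$, $(b,t)$ rewrites the four kernel integrals above in forms whose $t$-dependence occurs only through $f,g$, through $\Phi(t,\cdot)$ (differentiable in its first argument with $\partial_t\Phi(t,v) = k_{1-\alpha}(t,v)$), or through $\Psi(\cdot,t)$ (differentiable in its second argument with $\partial_t\Psi(u,t) = k_{1-\alpha}(u,t)$). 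Taking $\frac{d}{dt}$ and recombining, the two boundary integrals contribute $p\,y(a) k_{1-\alpha}(t,a) - q\,y(b) k_{1-\alpha}(b,t)$, while the $y'$-weighted integrals reassemble into $p\int_a^t k_{1-\alpha}(t,\tau) y'(\tau)\,d\tau + q\int_t^b k_{1-\alpha}(\tau,t) y'(\tau)\,d\tau = B_P^\alpha y(t)$.

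The main obstacle is the cancellation of ``awkward'' terms: after applying \eqref{eq:m:h}, several expressions of the form $\frac{d}{dt}\Phi(t,t)$ (or, equivalently, $\frac{d}{dt}\Psi(t,t)$) appear in both the boundary and the $y'$ parts, and they must be shown to cancel across the $p$- and $q$-contributions. The cancellation is forced by the identity $\frac{d}{dt}\Psi(t,t) = g'(t) = k_{1-\alpha}(t,a)$, a direct consequence of \eqref{eq:m:h}, which guarantees that all such ``bad'' terms vanish identically in the combined expression, leaving exactly \eqref{eq:relAB}. Integrability of $k_{1-\alpha}$ justifies each Fubini application, and absolute continuity of $y$ is essential for writing $y$ as a primitive of $y'$ in the initial step.
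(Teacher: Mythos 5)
Your argument is correct and is essentially the paper's own proof: writing $y$ as a primitive of $y'$ and applying Fubini is just the integration by parts that the paper performs explicitly, and your use of \eqref{eq:m:h} to force the $t$-differentiation to land only on $\Phi(t,\cdot)$ and $\Psi(\cdot,t)$ is precisely the paper's observation that $h_{1-\alpha}(t,\tau):=\int_a^\tau k_{1-\alpha}(t,\theta)\,d\theta-g(t)$ satisfies $\partial_2 h_{1-\alpha}=-\partial_1 h_{1-\alpha}=k_{1-\alpha}$. The diagonal cancellations you isolate at the end are the same ones the paper disposes of (rather more tersely) via the terms $\left.\frac{d}{dt}h_{1-\alpha}(t,t+\epsilon)\right|_{\epsilon=0}$.
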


\begin{proof}
Let $h_{1-\alpha}$ be defined by
$h_{1-\alpha}(t,\tau) := \int_a^\tau k_{1-\alpha}(t,\theta) d\theta -g(t)$.
Then, by hypothesis \eqref{eq:m:h},
$\partial_2 h_{1-\alpha} = -\partial_1 h_{1-\alpha}=k_{1-\alpha}$.
We obtain the intended conclusion from the definition of $A$-op and $B$-op,
integrating by parts, and differentiating:
\begin{equation*}
\begin{split}
A_P^\alpha &y(t)=\frac{d}{d t} K_P^{1-\alpha} y(t)
=\frac{d}{d t}\left\{ p\int_a^t k_{1-\alpha}(t,\tau)y(\tau)d\tau
+q\int_t^b k_{1-\alpha}(\tau,t)y(\tau)d\tau \right\}\\
&=\frac{d}{d t}\Biggl\{ \left.p y(t)
h_{1-\alpha}(t,\tau)\right|_{\tau=t}-p y(a)h_{1-\alpha}(t,a)
-p \int_a^t h_{1-\alpha}(t,\tau)\frac{d}{d\tau}y(\tau)d\tau\\
&\quad -q y(b)h_{1-\alpha}(b,t)+\left.q y(t)h_{1-\alpha}(\tau,t)\right|_{\tau=t}
+q \int_t^b h_{1-\alpha}(\tau,t)\frac{d}{d\tau}y(\tau)d\tau\Biggr\}\\
&=\left.p y(t)\frac{d}{d t}h_{1-\alpha}(t,t+\epsilon)\right|_{\epsilon=0}
-p y(a)\partial_1 h_{1-\alpha}(t,a)
- p \int_a^t \partial_1 h_{1-\alpha}(t,\tau)\frac{d}{d\tau}y(\tau)d\tau\\
&\quad -q y(b)\partial_2 h_{1-\alpha}(b,t)
+\left.q y(t)\frac{d}{dt}h_{1-\alpha}(t+\epsilon,t)\right|_{\epsilon=0}
+q \int_t^b \partial_2 h_{1-\alpha}(\tau,t)\frac{d}{d\tau}y(\tau)d\tau\\
&=p y(a)k_{1-\alpha}(t,a)-q y(b)k_{1-\alpha}(b,t)+B_P^\alpha y(t).
\end{split}
\end{equation*}
\end{proof}

\begin{example}
Let $k_{1-\alpha}(t-\tau)=\frac{1}{\Gamma(1-\alpha)}(t-\tau)^{-\alpha}$.
Simple calculations show that \eqref{eq:m:h} is satisfied.
If $P=\langle a,t,b,1,0\rangle$, then \eqref{eq:relAB}
reduces to the relation
\begin{equation*}
^{C}_{a}D^{\alpha}_{t}y(t)
= {_{a}D^{\alpha}_{t}}y(t)
-\frac{y(a)}{\Gamma(1-\alpha)}(t-a)^{-\alpha}
\end{equation*}
between the left Riemann--Liouville fractional derivative ${_{a}D^{\alpha}_{t}}$
and the left Caputo fractional derivative $^{C}_{a}D^{\alpha}_{t}$;
if $P=\langle a,t,b,0,1\rangle$, then we get the relation
\begin{equation*}
^{C}_{t}D^{\alpha}_{b}y(t)
= {_{t}D^{\alpha}_{b}}y(t)
-\frac{y(b)}{\Gamma(1-\alpha)}(b-t)^{-\alpha}
\end{equation*}
between the right Riemann--Liouville fractional derivative ${_{t}D^{\alpha}_{b}}$
and the right Caputo fractional derivative $^{C}_{t}D^{\alpha}_{b}$.
\end{example}


\subsection{Fractional integration by parts}
\label{sec:GFIP}

The proof of Theorem~\ref{thm:rel} uses one basic
but important technique of classical integral calculus:
integration by parts. In this section we obtain several
formulas of integration by parts
for the generalized fractional calculus.
Our results are particularly useful
with respect to applications in dynamic optimization
(\textrm{cf.} Section~\ref{sub:sec:new:EL:Cp}),
where the derivation of the Euler--Lagrange equations
uses, as a key step in the proof, integration by parts.

In our setting, integration by parts
changes a given $p$-set $P$ into its dual $P^{*}$.
The term \emph{duality} comes from the fact that
$P^{**} =P$.

\begin{definition}[Dual $p$-set]
Given a $p$-set $P=\langle a,t,b,p,q\rangle$
we denote by $P^{*}$ the $p$-set
$P^{*} = \langle a,t,b,q,p\rangle$.
We say that $P^{*}$ is the dual of $P$.
\end{definition}

Our first formula of fractional integration by parts
involves the $K$-op.

\begin{theorem}[Fractional integration by parts for the $K$-op]
\label{thm:gfip:Kop}
Let $\alpha \in (0,1)$, $P=\langle a,t,b,p,q\rangle$,
$k_{\alpha}$ be a square-integrable function
on $\Delta=[a,b]\times[a,b]$, and $f,g\in L_2\left([a,b]\right)$.
The generalized fractional integral satisfies
the integration by parts formula
\begin{equation}
\label{eq:fracIP:K}
\int\limits_a^b g(t)K_P^{\alpha}f(t)dt
=\int\limits_a^b f(t)K_{P^*}^{\alpha}g(t)dt,
\end{equation}
where $P^*$ is the dual of $P$.
\end{theorem}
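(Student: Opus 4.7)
The plan is to expand the left-hand side using the definition of $K_P^\alpha$, split it into two iterated integrals, apply Fubini's theorem to swap the order of integration in each, and observe that the swap exchanges the two coefficients $p$ and $q$ in precisely the way required to produce $K_{P^*}^\alpha g$ on the inside. Concretely, writing
\begin{equation*}
\int_a^b g(t)K_P^\alpha f(t)\,dt
= p\int_a^b\!\!\int_a^t g(t)k_\alpha(t,\tau)f(\tau)\,d\tau\,dt
+ q\int_a^b\!\!\int_t^b g(t)k_\alpha(\tau,t)f(\tau)\,d\tau\,dt,
\end{equation*}
the first double integral is over $\{a\le\tau\le t\le b\}$ and becomes $\{a\le\tau\le b,\ \tau\le t\le b\}$ after swapping, and symmetrically the second is over $\{a\le t\le\tau\le b\}$ and becomes $\{a\le\tau\le b,\ a\le t\le\tau\}$. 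Collecting the results yields
\begin{equation*}
\int_a^b f(\tau)\Bigl[q\int_a^\tau k_\alpha(\tau,t)g(t)\,dt + p\int_\tau^b k_\alpha(t,\tau)g(t)\,dt\Bigr]d\tau,
\end{equation*}
and the bracket is exactly $K_{P^*}^\alpha g(\tau)$ by the definition of the $K$-op with dual parameter set $P^*=\langle a,\tau,b,q,p\rangle$.

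To make Fubini rigorous I would use the function $G(t,\tau)$ introduced in the proof that $K_P^\alpha$ is bounded on $L_2([a,b])$, so that $K_P^\alpha f(t)=\int_a^b G(t,\tau)f(\tau)\,d\tau$, and verify that $(t,\tau)\mapsto G(t,\tau)g(t)f(\tau)$ is absolutely integrable on $\Delta=[a,b]\times[a,b]$. Two successive applications of the Cauchy--Schwarz inequality suffice: first
\begin{equation*}
\int_a^b |G(t,\tau)f(\tau)|\,d\tau \le \|f\|_2\Bigl(\int_a^b |G(t,\tau)|^2 d\tau\Bigr)^{1/2},
\end{equation*}
then integrating the product of $|g(t)|$ with the right-hand side in $t$ gives the bound $\|g\|_2\|f\|_2\|G\|_{L_2(\Delta)}$, which is finite by the hypothesis that $k_\alpha$ is square-integrable on $\Delta$. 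Fubini's theorem then legitimizes the interchange in both pieces.

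The main obstacle is not depth but bookkeeping: one must track carefully which coefficient $p$ or $q$ is attached to which double integral, and confirm that after swapping the order of integration the roles of $p$ and $q$ get interchanged relative to the order of arguments in $k_\alpha$. This is precisely what makes $P^*$ appear rather than $P$, and it is the content of the theorem. No special structural assumption on the kernel (such as being a difference kernel) is needed, only square-integrability on $\Delta$; the $L_2$ setting is what allows Cauchy--Schwarz to dominate the integrand and thereby justify Fubini in one clean step.
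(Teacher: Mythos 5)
Your proposal is correct and follows essentially the same route as the paper: justify absolute integrability of $(t,\tau)\mapsto G(t,\tau)g(t)f(\tau)$ on $\Delta$ via the Cauchy--Schwarz inequality and the square-integrability of $k_\alpha$, then apply Fubini's theorem to swap the order of integration and read off $K_{P^*}^{\alpha}g$ from the resulting inner integrals. Your double application of Cauchy--Schwarz yielding the bound $\|f\|_2\|g\|_2\|G\|_{L_2(\Delta)}$ is a slightly tidier packaging of the paper's estimate, but it is not a different argument.
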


\begin{proof}
Let $\alpha\in(0,1)$, $P=\langle a,t,b,p,q\rangle$,
and $f,g\in L_2\left([a,b]\right)$. Define
\[
F(\tau,t):=
\left\{
\begin{array}{ll}
\left|pk_\alpha(t,\tau)\right|
\cdot\left|g(t)\right|\cdot \left|f(\tau)\right|
& \mbox{if $\tau \leq t$}\\
\left|qk_\alpha(\tau,t)\right|
\cdot \left|g(t)\right|\cdot\left|f(\tau)\right|
& \mbox{if $\tau > t$}
\end{array}\right.
\]
for all $(\tau,t)\in\Delta$. Then, applying Holder's inequality, we obtain
\begin{equation*}
\begin{split}
\int_a^b &\left(\int_a^b F(\tau,t)dt\right)d\tau\\
&=\int_a^b\left[\left|f(\tau)\right|\left(\int_{\tau}^b\left|pk_\alpha(t,\tau)\right|
\cdot\left|g(t)\right|dt+\int_{a}^{\tau}\left|qk_\alpha(\tau,t)\right|
\cdot\left|g(t)\right|dt\right)\right]d\tau\\
&\leq \int_a^b\left[\left|f(\tau)\right|\left(
\int_{a}^b\left|pk_\alpha(t,\tau)\right|\cdot
\left|g(t)\right|dt+\int_{a}^{b}\left|qk_\alpha(\tau,t)\right|
\cdot\left|g(t)\right|dt\right)\right]d\tau\\
&\leq \int_a^b\left\{\left|f(\tau)\right|\left[\left(
\int_a^b\left|pk_\alpha(t,\tau)\right|^2dt\right)^{\frac{1}{2}}\left(
\int_a^b\left|g(t)\right|^2dt\right)^{\frac{1}{2}}\right.\right.\\
&\qquad\qquad \left.\left.+\left(\int_a^b\left|qk_\alpha(\tau,t)\right|^2
dt\right)^{\frac{1}{2}}\left(\int_a^b\left|g(t)\right|^2
dt\right)^{\frac{1}{2}}\right]\right\}d\tau.
\end{split}
\end{equation*}
By Fubini's theorem, functions $k_{\alpha,\tau}(t):=k_{\alpha}(t,\tau)$
and $\hat{k}_{\alpha,\tau}(t):=k_{\alpha}(\tau,t)$
belong to $L_2\left([a,b]\right)$ for almost all $\tau\in[a,b]$. Therefore,
\begin{equation*}
\begin{split}
\int_a^b &\left\{\left|f(\tau)\right|\left[\left(\int_a^b\left|pk_\alpha(t,\tau)\right|^2dt
\right)^{\frac{1}{2}}\left(\int_a^b\left|g(t)\right|^2dt\right)^{\frac{1}{2}}\right.\right.\\
&\qquad\left.\left.+\left(\int_a^b\left|qk_\alpha(\tau,t)\right|^2dt\right)^{\frac{1}{2}}\left(
\int_a^b\left|g(t)\right|^2dt\right)^{\frac{1}{2}}\right]\right\}d\tau\\
&=\left\|g\right\|_2\int_a^b\left[\left|f(\tau)\right|\left(\left\|pk_{\alpha,\tau}\right\|_2
+\left\|q\hat{k}_{\alpha,\tau}\right\|_2\right)\right]d\tau\\
&\leq \left\|g\right\|_2 \left(\int_a^b\left|f(\tau)\right|^2d\tau\right)^{\frac{1}{2}}\left(
\int_a^b\left|\left\|pk_{\alpha,\tau}\right\|_2+\left\|q\hat{k}_{\alpha,\tau}\right\|_2\right|^2
d\tau\right)^{\frac{1}{2}}\\
&\leq\left\|g\right\|_2\cdot\left\|f\right\|_2\left(\left\|pk_\alpha\right\|_2
+\left\|q k_\alpha\right\|_2\right) < \infty.
\end{split}
\end{equation*}
Hence, we can use again Fubini's theorem
to change the order of integration:
\begin{equation*}
\begin{split}
\int\limits_a^b g(t)K_P^{\alpha}f(t)dt
&= p\int\limits_a^b g(t)dt\int\limits_a^t f(\tau)k_{\alpha}(t,\tau)d\tau
+q\int\limits_a^b g(t)dt\int\limits_t^b f(\tau)k_{\alpha}(\tau,t)d\tau\\
&= p\int\limits_a^b f(\tau)d\tau\int\limits_{\tau}^b g(t)k_{\alpha}(t,\tau)dt
+q\int\limits_a^b f(\tau)d\tau\int\limits_a^{\tau} g(t)k_{\alpha}(\tau,t)dt\\
&=\int\limits_a^b f(\tau)K_{P^*}^{\alpha}g(\tau)d\tau.
\end{split}
\end{equation*}
\end{proof}

Next example shows that one cannot relax
the hypotheses of Theorem~\ref{thm:gfip:Kop}.

\begin{example}
Let $P=\langle 0,t,1,1,-1\rangle$, $f(t)=g(t)\equiv 1$,
and $k_{\alpha}(t,\tau)=\frac{t^2-\tau^2}{(t^2+\tau^2)^2}$.
Direct calculations show that
\begin{equation*}
\begin{split}
\int_0^1 K_P^{\alpha} 1 dt
&=\int_0^1\left(\int_0^t\frac{t^2-\tau^2}{(t^2+\tau^2)^2}d\tau
-\int_t^1\frac{\tau^2-t^2}{(t^2+\tau^2)^2}d\tau\right)dt\\
&=\int_0^1\left(\int_0^1\frac{t^2-\tau^2}{(t^2+\tau^2)^2}d\tau\right)dt
=\int_0^1 \frac{1}{t^2+1}dt=\frac{\pi}{4}
\end{split}
\end{equation*}
and
\begin{equation*}
\begin{split}
\int_0^1 K_{P^*}^{\alpha} 1 d\tau
&=\int_0^1\left(-\int_0^\tau\frac{\tau^2-t^2}{(t^2+\tau^2)^2}dt
+\int_\tau^1\frac{t^2-\tau^2}{(t^2+\tau^2)^2}dt\right)d\tau\\
&=-\int_0^1\left(\int_0^1\frac{\tau^2-t^2}{(t^2+\tau^2)^2}dt\right)d\tau
=-\int_0^1 \frac{1}{\tau^2+1}d\tau=-\frac{\pi}{4}.
\end{split}
\end{equation*}
Therefore, the integration by parts formula \eqref{eq:fracIP:K} does not hold.
Observe that in this case
$\int_0^1 \int_0^1 \left|k_\alpha(t,\tau)\right|^2 d\tau dt=\infty$.
\end{example}

For the classical Riemann--Liouville fractional integrals
the following result holds.

\begin{corollary}
Let $\frac{1}{2}<\alpha<1$. If $f, g\in L_2([a,b])$, then
\begin{equation}
\label{eq:PartsFrac}
\int_{a}^{b} g(t){_aI_t^\alpha}f(t)dt =\int_a^b f(t){_t
I_b^\alpha} g(t)dt.
\end{equation}
\end{corollary}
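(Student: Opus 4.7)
The plan is to derive \eqref{eq:PartsFrac} as an immediate specialization of Theorem~\ref{thm:gfip:Kop}. I would take the kernel $k_\alpha(t,\tau) = \frac{1}{\Gamma(\alpha)}(t-\tau)^{\alpha-1}$ together with the $p$-set $P = \langle a,t,b,1,0\rangle$, whose dual is $P^* = \langle a,t,b,0,1\rangle$. With these choices, formulas \eqref{eq:class:left:int} and \eqref{eq:class:right:int} from the remark in Section~\ref{sec:prelim} identify $K_P^\alpha f(t) = {_aI_t^\alpha}f(t)$ and $K_{P^*}^\alpha g(t) = {_tI_b^\alpha}g(t)$, so the identity \eqref{eq:fracIP:K} becomes exactly \eqref{eq:PartsFrac}. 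The whole task therefore reduces to checking the hypotheses of Theorem~\ref{thm:gfip:Kop}.

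The integrability of $f$ and $g$ is assumed, so the one nontrivial point is to verify that this particular $k_\alpha$ is square-integrable on $\Delta = [a,b]\times[a,b]$, and this is exactly where the restriction $\alpha > 1/2$ must enter. I would compute
\begin{equation*}
\int_a^b\int_a^b |k_\alpha(t,\tau)|^2 \, d\tau\, dt
= \frac{1}{\Gamma(\alpha)^2}\int_a^b\int_a^b |t-\tau|^{2\alpha-2}\, d\tau\, dt,
\end{equation*}
and split the inner integral at $\tau = t$. Substituting $u = |t-\tau|$ reduces each piece to an integral of $u^{2\alpha-2}$ near zero, which converges precisely when $2\alpha-2 > -1$, i.e., when $\alpha > 1/2$; the subsequent outer integration in $t$ over $[a,b]$ is then trivially finite.

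With the square-integrability of the kernel in hand, a single appeal to Theorem~\ref{thm:gfip:Kop} delivers the conclusion. The only genuine obstacle is the diagonal-singularity estimate tying the threshold $\alpha > 1/2$ to $L_2$-integrability of the Riemann--Liouville kernel on $\Delta$; once that is settled, the remainder of the argument is simply bookkeeping about $p$-sets and their duals.
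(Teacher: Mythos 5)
Your proposal is correct and follows essentially the same route as the paper: specialize Theorem~\ref{thm:gfip:Kop} to the kernel $k_\alpha(t,\tau)=\frac{1}{\Gamma(\alpha)}(t-\tau)^{\alpha-1}$ and the $p$-set $P=\langle a,t,b,1,0\rangle$, with the condition $\alpha>\frac12$ serving only to guarantee square-integrability of the kernel on $\Delta$. The sole difference is that the paper cites Hardy--Littlewood for that integrability fact, whereas you verify it by the direct (and correct) computation that $|t-\tau|^{2\alpha-2}$ is integrable near the diagonal precisely when $2\alpha-2>-1$.
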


\begin{proof}
Let $k_{\alpha}(t,\tau)=\frac{1}{\Gamma(\alpha)}(t-\tau)^{\alpha-1}$.
For $\alpha\in\left(\frac{1}{2},1\right)$,
$k_\alpha$ is a square-integrable function
on $\Delta$ (see, \textrm{e.g.}, \cite[Theorem~4]{HardyLittlewood}).
Therefore, \eqref{eq:PartsFrac} follows from \eqref{eq:fracIP:K}.
\end{proof}

\begin{theorem}
\label{thm:IPL1}
Let $0<\alpha<1$ and $P=\langle a,t,b,p,q\rangle$.
If $k_{\alpha}(t,\tau)=k_{\alpha}(t-\tau)$,
$k_{\alpha}\in L_1\left([a,b]\right)$,
and $f,g\in C\left([a,b]\right)$, then
the integration by parts formula
\eqref{eq:fracIP:K} holds.
\end{theorem}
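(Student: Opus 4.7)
The plan is to adapt the proof scheme of Theorem~\ref{thm:gfip:Kop} to the present hypotheses, replacing the Cauchy--Schwarz/H\"older bound (which required $k_\alpha$ square-integrable) by a trivial sup-norm bound that exploits the continuity of $f$ and $g$ on the compact interval $[a,b]$. The core idea is unchanged: verify that an auxiliary non-negative integrand is integrable on the square $\Delta=[a,b]\times[a,b]$, then apply Fubini's theorem to swap the order of integration in $\int_a^b g(t)K_P^\alpha f(t)\,dt$. The swap interchanges the roles of $p$ and $q$ in the kernel and therefore produces precisely $\int_a^b f(\tau)K_{P^*}^\alpha g(\tau)\,d\tau$.

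Concretely, I would first introduce
\[
F(\tau,t):=\begin{cases} |p|\,|k_\alpha(t-\tau)|\,|g(t)|\,|f(\tau)| & \text{if } \tau\leq t,\\ |q|\,|k_\alpha(\tau-t)|\,|g(t)|\,|f(\tau)| & \text{if } \tau>t,\end{cases}
\]
which is measurable on $\Delta$ (since $k_\alpha$ is measurable and $f,g$ are continuous). Using that continuous functions on $[a,b]$ are bounded, I would bound the double integral of $F$ by pulling out $\|f\|_\infty$ and $\|g\|_\infty$ and, in each of the two triangular pieces, performing the change of variables $u=t-\tau$ (respectively $u=\tau-t$). This reduces the remaining integral to a multiple of $\|k_\alpha\|_1$, yielding a finite bound of the shape $\|f\|_\infty\|g\|_\infty(|p|+|q|)(b-a)\|k_\alpha\|_1$ and establishing integrability of $F$ on $\Delta$. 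This step is essentially the $L_1$ analogue of the estimate carried out in Theorem~\ref{theorem:L1}.

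With Fubini's theorem justified, I would split $\int_a^b g(t)K_P^\alpha f(t)\,dt$ into its $p$-part and $q$-part, write each as an iterated integral over the appropriate triangular region of $\Delta$, and exchange the order of integration. The $p$-term becomes $p\int_a^b f(\tau)\int_\tau^b g(t)k_\alpha(t-\tau)\,dt\,d\tau$, and the $q$-term becomes $q\int_a^b f(\tau)\int_a^\tau g(t)k_\alpha(\tau-t)\,dt\,d\tau$; their sum is exactly $\int_a^b f(\tau)K_{P^*}^\alpha g(\tau)\,d\tau$, since passing from $P$ to $P^*$ swaps $p$ and $q$. The only step requiring genuine care is the integrability verification for $F$; once that is in hand, the identity \eqref{eq:fracIP:K} is a routine consequence of Fubini's theorem.
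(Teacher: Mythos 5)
Your proposal is correct and follows essentially the same route as the paper: define the auxiliary non-negative integrand $F(\tau,t)$ on the square, use the boundedness of the continuous functions $f,g$ together with $\|k_\alpha\|_1$ to verify its integrability, and then invoke Fubini's theorem to swap the order of integration, which exchanges $p$ and $q$ and yields $K_{P^*}^\alpha$. (Your bound $\|f\|_\infty\|g\|_\infty(|p|+|q|)(b-a)\|k_\alpha\|_1$ even corrects the sign slip $(|p|-|q|)$ appearing in the paper's own estimate.)
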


\begin{proof}
Let $\alpha\in(0,1)$, $P=\langle a,t,b,p,q\rangle$,
and $f,g\in C\left([a,b]\right)$. Define
\[
F(\tau,t):= \left\{
\begin{array}{ll}
\left|pk_\alpha(t-\tau)\right|\cdot\left|g(t)\right|
\cdot \left|f(\tau)\right| & \mbox{if $\tau \leq t$}\\
\left|qk_\alpha(\tau-t)\right|\cdot \left|g(t)\right|
\cdot\left|f(\tau)\right| & \mbox{if $\tau > t$}
\end{array}\right.
\]
for all $(\tau,t)\in\Delta$. Since $f$ and $g$
are continuous functions on $[a,b]$, they are bounded on $[a,b]$,
\textrm{i.e.}, there exist real numbers $C_1,C_2>0$ such that
$\left|g(t)\right| \leq C_1$ and
$\left|f(t)\right|\leq C_2$
for all $t\in [a,b]$. Therefore,
\begin{equation*}
\begin{split}
\int_a^b&\left(\int_a^b F(\tau,t)dt\right)d\tau\\
&=\int_a^b\left[\left|f(\tau)\right|\left(
\int_{\tau}^b\left|pk_\alpha(t-\tau)\right|\cdot\left|g(t)\right|dt
+\int_{a}^{\tau}\left|qk_\alpha(\tau-t)\right|
\cdot\left|g(t)\right|dt\right)\right]d\tau\\
&\leq \int_a^b\left[\left|f(\tau)\right|\left(
\int_{a}^b\left|pk_\alpha(t-\tau)\right|
\cdot\left|g(t)\right|dt+\int_{a}^{b}\left|qk_\alpha(\tau-t)\right|
\cdot\left|g(t)\right|dt\right)\right]d\tau\\
&\leq C_1 C_2\int_a^b\left(\int_{a}^b\left|pk_\alpha(t-\tau)\right|dt
+\int_{a}^{b}\left|qk_\alpha(\tau-t)\right|dt\right)d\tau\\
&=C_1 C_2 (\left|p\right|-\left|q\right|)\left\|k_\alpha\right\|(b-a)<\infty.
\end{split}
\end{equation*}
Hence, we can use Fubini's theorem to change
the order of integration in the iterated integrals.
\end{proof}

The next theorem follows from the classical formula
of integration by parts and fractional integration
by parts for the $K$-op.

\begin{theorem}[Fractional integration by parts for $A$-op and $B$-op]
\label{thm:gfip}
Let $\alpha \in (0,1)$ and $P=\langle a,t,b,p,q\rangle$.
If $f,g \in AC([a,b])$, then
\begin{gather}
\int\limits_a^b  g(t)A_{P}^\alpha f(t)dt
=\left. g(t) K_{P}^{1-\alpha}f(t)\right|_a^b
-\int_a^b f(t)B_{P^*}^\alpha g(t)dt,\label{eq:fip:1}\\
\int\limits_a^b g(t)B_{P}^\alpha f(t)dt
=\left. f(t) K_{P^*}^{1-\alpha}g(t)\right|_a^b
-\int_a^b f(t) A_{P^*}^\alpha g(t)dt.\label{eq:fip:2}
\end{gather}
\end{theorem}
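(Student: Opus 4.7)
The plan is to derive both formulas by composing the classical integration-by-parts formula with the $K$-op duality \eqref{eq:fracIP:K}, using the definitions $A_P^\alpha = D \circ K_P^{1-\alpha}$ and $B_P^\alpha = K_P^{1-\alpha} \circ D$. The key enabling fact is Theorem~\ref{theorem:exist}, which guarantees that $K_P^{1-\alpha} f$ and $K_{P^*}^{1-\alpha} g$ belong to $AC([a,b])$, so that the boundary terms are well defined and classical integration by parts is legitimate.

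For \eqref{eq:fip:1} I would start from the left-hand side, substitute $A_P^\alpha f = \tfrac{d}{dt}\bigl(K_P^{1-\alpha} f\bigr)$, and then apply classical integration by parts to the absolutely continuous pair $g$ and $K_P^{1-\alpha} f$. This produces the boundary term $[g(t) K_P^{1-\alpha} f(t)]_a^b$ together with $-\int_a^b g'(t) K_P^{1-\alpha} f(t)\,dt$. Next I would invoke Theorem~\ref{thm:gfip:Kop} (or Theorem~\ref{thm:IPL1} if the kernel is of difference type) to transfer $K_P^{1-\alpha}$ onto $g'$, i.e.\ $\int_a^b g'(t)K_P^{1-\alpha}f(t)\,dt=\int_a^b f(t)K_{P^*}^{1-\alpha}g'(t)\,dt$. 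Recognising $K_{P^*}^{1-\alpha}\circ D = B_{P^*}^\alpha$ collapses the remaining integral to $\int_a^b f(t)B_{P^*}^\alpha g(t)\,dt$, giving the stated formula.

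For \eqref{eq:fip:2} I would invert the order of the two steps. Substituting $B_P^\alpha f = K_P^{1-\alpha}(f')$ into the left-hand side and applying the $K$-op duality first yields $\int_a^b g(t)K_P^{1-\alpha}(f')(t)\,dt = \int_a^b f'(t) K_{P^*}^{1-\alpha}g(t)\,dt$. Since both $f$ and $K_{P^*}^{1-\alpha}g$ are in $AC([a,b])$, a classical integration by parts then produces the boundary term $[f(t) K_{P^*}^{1-\alpha}g(t)]_a^b$ plus $-\int_a^b f(t)\tfrac{d}{dt}\bigl(K_{P^*}^{1-\alpha}g\bigr)(t)\,dt$, and recognising $D\circ K_{P^*}^{1-\alpha} = A_{P^*}^\alpha$ completes the argument.

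The main obstacle is not algebraic but regulatory: ensuring that the hypotheses of \eqref{eq:fracIP:K} actually apply to the intermediate pairs $(f,g')$ and $(g,f')$. Since $f,g\in AC([a,b])$ the derivatives $f',g'$ lie only in $L_1$, and they need not be continuous, so one cannot in general appeal to Theorem~\ref{thm:IPL1}. One should therefore either impose (as is implicit in the surrounding framework) that the kernel $k_{1-\alpha}$ is square-integrable on $\Delta$, so that Theorem~\ref{thm:gfip:Kop} applies with $f,g'\in L_2\subset L_1$, or restrict to difference kernels in $L_1$ together with a mild integrability strengthening of $f',g'$ that allows the Fubini swap in the proof of Theorem~\ref{thm:IPL1} to go through. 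Once that technical point is granted, the two identities fall out immediately from the combination of classical and $K$-op integration by parts described above.
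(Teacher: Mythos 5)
Your proof follows essentially the same route as the paper's: substitute the definitions $A_P^\alpha = D\circ K_P^{1-\alpha}$ and $B_P^\alpha = K_P^{1-\alpha}\circ D$, then combine classical integration by parts with the $K$-op duality of Theorem~\ref{thm:gfip:Kop}, applying the two steps in opposite orders for \eqref{eq:fip:1} and \eqref{eq:fip:2}. Your closing remark that the theorem as stated does not explicitly secure the hypotheses of Theorem~\ref{thm:gfip:Kop} for the intermediate pairs $(f,g')$ and $(g,f')$ is a fair observation --- the paper's proof invokes that duality without comment, implicitly assuming the requisite kernel regularity.
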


\begin{proof}
From Definition~\ref{def:GRL} one has
$A_P^\alpha f(t) = D K_P^{1-\alpha} f(t)$.
Therefore,
\begin{equation*}
\begin{split}
\int_a^b g(t) A_{P}^\alpha f(t) dt
&= \int_a^b g(t) D K_P^{1-\alpha} f(t) dt\\
&= \left. g(t) K_P^{1-\alpha} f(t) \right|_a^b
- \int_a^b D g(t) K_P^{1-\alpha} f(t) dt,
\end{split}
\end{equation*}
where the second equality follows by the
standard integration by parts formula.
From \eqref{eq:fracIP:K} of Theorem~\ref{thm:gfip:Kop}
it follows the desired equality \eqref{eq:fip:1}:
\begin{equation*}
\begin{split}
\int_a^b g(t) A_{P}^\alpha f(t) dt
&= \left. g(t) K_{P}^{1-\alpha}f(t)\right|_a^b
-\int_a^b f(t) K_{P^*}^{1-\alpha} D g(t)dt.
\end{split}
\end{equation*}
We now prove \eqref{eq:fip:2}.
From Definition~\ref{def:GC} we know that
$B_P^\alpha f(t) = K_P^{1-\alpha} D f(t)$.
It follows that
$$
\int_a^b g(t) B_{P}^\alpha f(t)dt
= \int_a^b g(t) K_{P}^{1-\alpha} D f(t) dt.
$$
By Theorem~\ref{thm:gfip:Kop}
$$
\int_a^b g(t) B_{P}^\alpha f(t)dt
= \int_a^b D f(t) K_{P^*}^{1-\alpha} g(t) dt.
$$
The standard integration by parts formula
implies relation \eqref{eq:fip:2}:
$$
\int_a^b g(t) B_{P}^\alpha f(t)dt
=\left. f(t) K_{P^*}^{1-\alpha} g(t) \right|_a^b
-\int_a^b f(t) D K_{P^*}^{1-\alpha} g(t)dt.
$$
\end{proof}

\begin{corollary}
Let $0<\alpha<1$. If $f, g \in AC([a,b])$, then
\begin{equation*}
\begin{split}
\int_{a}^{b}  g(t) \, {^C_aD_t^\alpha}f(t)dt &=\left.f(t){_t
I_b^{1-\alpha}} g(t)\right|^{t=b}_{t=a}+\int_a^b f(t){_t D_b^\alpha}
g(t)dt,\\
\int_a^b f(t){_a D_t^\alpha} g(t)dt
&= \left. f(t){_a I_t^{1-\alpha}} g(t)\right|^{t=b}_{t=a}
+ \int_{a}^{b}  g(t) \, {^C_t D_b^\alpha}f(t)dt .
\end{split}
\end{equation*}
\end{corollary}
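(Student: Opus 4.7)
The plan is to deduce this corollary as a direct specialization of Theorem~\ref{thm:gfip}, using the identifications between the generalized operators and the classical Riemann--Liouville and Caputo derivatives that were recorded in the two remarks of Section~\ref{sec:prelim}. Concretely, I fix the kernel $k_{1-\alpha}(t-\tau)=\frac{1}{\Gamma(1-\alpha)}(t-\tau)^{-\alpha}$ throughout, together with $P=\langle a,t,b,1,0\rangle$, whose dual is $P^{*}=\langle a,t,b,0,1\rangle$. With these choices one reads off
\begin{equation*}
K_{P}^{1-\alpha}f = {_{a}I_{t}^{1-\alpha}}f, \qquad A_{P}^{\alpha}f = {_{a}D_{t}^{\alpha}}f, \qquad B_{P}^{\alpha}f = {^{C}_{a}D_{t}^{\alpha}}f,
\end{equation*}
and, by the sign conventions in the second remark,
\begin{equation*}
K_{P^{*}}^{1-\alpha}g = {_{t}I_{b}^{1-\alpha}}g, \qquad A_{P^{*}}^{\alpha}g = -{_{t}D_{b}^{\alpha}}g, \qquad B_{P^{*}}^{\alpha}g = -{^{C}_{t}D_{b}^{\alpha}}g.
\end{equation*}

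For the first identity I apply formula \eqref{eq:fip:2} of Theorem~\ref{thm:gfip} with the above $P$. The left-hand side becomes $\int_{a}^{b}g(t)\,{^{C}_{a}D_{t}^{\alpha}}f(t)\,dt$, the boundary term becomes $\left.f(t){_{t}I_{b}^{1-\alpha}}g(t)\right|_{a}^{b}$, and since $A_{P^{*}}^{\alpha}g = -{_{t}D_{b}^{\alpha}}g$, the resulting $-\int_{a}^{b}f(t)A_{P^{*}}^{\alpha}g(t)\,dt$ flips sign and yields $+\int_{a}^{b}f(t){_{t}D_{b}^{\alpha}}g(t)\,dt$, which is exactly the claim.

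For the second identity I use \eqref{eq:fip:1}, but with the roles of $f$ and $g$ interchanged (the formula is symmetric in this respect because it holds for arbitrary $f,g\in AC([a,b])$). This gives
\begin{equation*}
\int_{a}^{b} f(t)\,A_{P}^{\alpha}g(t)\,dt = \left.f(t)K_{P}^{1-\alpha}g(t)\right|_{a}^{b} - \int_{a}^{b} g(t)\,B_{P^{*}}^{\alpha}f(t)\,dt.
\end{equation*}
Substituting the identifications above and using $B_{P^{*}}^{\alpha}f = -{^{C}_{t}D_{b}^{\alpha}}f$ produces precisely the second identity.

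The only thing that requires a moment of care is bookkeeping of the minus signs that appear because the right-sided classical operators are defined as $-A_{P^{*}}^{\alpha}$ and $-B_{P^{*}}^{\alpha}$ rather than $A_{P^{*}}^{\alpha}$ and $B_{P^{*}}^{\alpha}$; once this is done, both equalities are immediate. There is no analytic difficulty: the hypothesis $f,g\in AC([a,b])$ is exactly what Theorem~\ref{thm:gfip} requires, and the chosen kernel is integrable, so all operators on both sides are well defined.
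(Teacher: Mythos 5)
Your derivation is correct and is exactly the argument the paper intends: the corollary is stated without proof as an immediate specialization of Theorem~\ref{thm:gfip} to the kernel $k_{1-\alpha}(t-\tau)=\frac{1}{\Gamma(1-\alpha)}(t-\tau)^{-\alpha}$ with $P=\langle a,t,b,1,0\rangle$ and its dual. Your sign bookkeeping for the right-sided operators ($A_{P^{*}}^{\alpha}=-{_{t}D_{b}^{\alpha}}$, $B_{P^{*}}^{\alpha}=-{^{C}_{t}D_{b}^{\alpha}}$) matches the conventions in the paper's remarks, so both identities follow as you state.
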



\subsection{Fractional variational problems}
\label{sub:sec:new:EL:Cp}

We study variational functionals with a Lagrangian depending
on generalized Caputo fractional derivatives
as well as derivatives of integer order.
Note that the only possibility of obtaining $y'$ from
$B_P^\alpha y$ or $A_P^\alpha y$ is to take the limit when $\alpha$ tends to one
but, in general, such a limit does not exist \cite{Ross:Samko:Love}.
Moreover, our Lagrangians may also depend on generalized fractional integrals.
This last possibility is used in Section~\ref{sec:coh} to solve
the important \emph{coherence problem}.

Our proofs are easily adapted to the cases when one considers
Riemann--Liouville $A$-op derivatives instead of
Caputo $B$-op derivatives, and vector admissible
functions $y$ instead of scalar ones.
Such versions are left to the reader.


\subsubsection{Fundamental problem}
\label{sec:fp}

We consider the problem of extremizing
(minimizing or maximizing) the functional
\begin{equation}
\label{eq:31}
\mathcal{J}[y]=\int\limits_a^b
F\left(t,y(t),y'(t),B_{P_1}^\alpha y(t),K_{P_2}^\beta y(t)\right)dt
\end{equation}
subject to boundary conditions
\begin{equation}
\label{eq:32}
y(a)=y_a, \quad y(b)=y_b,
\end{equation}
where $\alpha,\beta\in(0,1)$
and $P_j=\langle a,t,b,p_j,q_j\rangle$, $j=1,2$.

\begin{definition}
A Lipschitz function $y\in Lip\left([a,b];\mathbb{R}\right)$ is said to be
admissible for the fractional variational problem \eqref{eq:31}--\eqref{eq:32}
if it satisfies the given boundary conditions \eqref{eq:32}.
\end{definition}

For simplicity of notation we introduce the operator
$\left\{ \cdot \right\}_{P_1, P_2}^{\alpha,\beta}$ defined by
\begin{equation*}
\left\{y\right\}_{P_1, P_2}^{\alpha,\beta}(t)
=\left(t,y(t),y'(t),B_{P_1}^\alpha y(t),K_{P_2}^\beta y(t)\right).
\end{equation*}
We can then write \eqref{eq:31} in the form
$\mathcal{J}[y]=\int\limits_a^b
F\left\{y\right\}_{P_1, P_2}^{\alpha,\beta}(t) dt$.
We assume that $F\in C^1\left([a,b]\times\mathbb{R}^4;\mathbb{R}\right)$,
$t \mapsto \partial_4 F\left\{y\right\}_{P_1,P_2}^{\alpha,\beta}(t)$
is absolutely continuous and has a continuous derivative $A_{P_1^*}^\alpha$,
and $t \mapsto \partial_3 F\left\{y\right\}_{P_1,P_2}^{\alpha,\beta}(t)$
has a continuous derivative $\frac{d}{dt}$.

Next result gives a necessary optimality condition of Euler--Lagrange
type for problem \eqref{eq:31}--\eqref{eq:32}.

\begin{theorem}
\label{theorem:ELCaputo}
Let $y$ be a solution to problem \eqref{eq:31}--\eqref{eq:32}.
Then, $y$ satisfies the generalized Euler--Lagrange equation
\begin{multline}
\label{eq:eqELCaputo}
\partial_2 F \left\{y\right\}_{P_1, P_2}^{\alpha,\beta}(t)
-\frac{d}{dt}\partial_3 F\left\{y\right\}_{P_1, P_2}^{\alpha,\beta}(t)
-A_{P_1^*}^\alpha\partial_4 F\left\{y\right\}_{P_1, P_2}^{\alpha,\beta}(t)\\
+K_{P_2^*}^\beta\partial_5 F\left\{y\right\}_{P_1, P_2}^{\alpha,\beta}(t)=0
\end{multline}
for $t\in[a,b]$.
\end{theorem}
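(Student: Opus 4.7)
The plan is to carry out the classical variational argument adapted to the mixed integer/fractional-order setting: perturb the extremizer $y$ by an admissible variation, differentiate the functional at $\varepsilon=0$, use integration by parts to strip every operator off the variation, and then apply the fundamental lemma of the calculus of variations.

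Concretely, I fix $\eta\in Lip([a,b];\mathbb{R})$ with $\eta(a)=\eta(b)=0$, so that $y+\varepsilon\eta$ is admissible for every $\varepsilon\in\mathbb{R}$. The $C^1$-regularity of $F$, together with the linearity of $B_{P_1}^\alpha$ and $K_{P_2}^\beta$ (inherited from the linearity of $K$-op and $D$), permits differentiation under the integral sign, and the optimality of $y$ yields
\[
\int_a^b\Bigl[\partial_2 F\cdot\eta + \partial_3 F\cdot\eta' + \partial_4 F\cdot B_{P_1}^\alpha\eta + \partial_5 F\cdot K_{P_2}^\beta\eta\Bigr]dt = 0,
\]
where for readability I abbreviate $\partial_i F := \partial_i F\{y\}_{P_1,P_2}^{\alpha,\beta}$. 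I now eliminate the three non-multiplicative occurrences of $\eta$: classical integration by parts applied to $\int\partial_3F\cdot\eta'\,dt$, with boundary term vanishing by $\eta(a)=\eta(b)=0$, gives $-\int\frac{d}{dt}\partial_3F\cdot\eta\,dt$; formula \eqref{eq:fip:2} of Theorem~\ref{thm:gfip}, applied with $g=\partial_4F$ and $f=\eta$ (both absolutely continuous, by hypothesis and by Lipschitz continuity respectively), gives $-\int A_{P_1^*}^\alpha\partial_4F\cdot\eta\,dt$, the boundary term again vanishing; and formula \eqref{eq:fracIP:K} of Theorem~\ref{thm:gfip:Kop} (or Theorem~\ref{thm:IPL1}) with $g=\partial_5F$ and $f=\eta$ gives $\int K_{P_2^*}^\beta\partial_5F\cdot\eta\,dt$.

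Collecting the four resulting integrals reduces the first-variation identity to
\[
\int_a^b\eta(t)\Bigl[\partial_2 F-\frac{d}{dt}\partial_3F-A_{P_1^*}^\alpha\partial_4F+K_{P_2^*}^\beta\partial_5F\Bigr]dt=0
\]
for every $\eta\in Lip([a,b];\mathbb{R})$ with $\eta(a)=\eta(b)=0$, and the fundamental lemma of the calculus of variations delivers \eqref{eq:eqELCaputo} pointwise on $[a,b]$ (the bracket being continuous under the stated hypotheses). The only delicate point---and the reason for the regularity assumptions imposed just before the theorem---is ensuring that the three integration-by-parts formulas actually apply: absolute continuity of $\partial_4F\{y\}_{P_1,P_2}^{\alpha,\beta}$ unlocks Theorem~\ref{thm:gfip}, $C^1$-smoothness of $\partial_3F\{y\}_{P_1,P_2}^{\alpha,\beta}$ unlocks classical integration by parts, and continuity of $\partial_5F\{y\}_{P_1,P_2}^{\alpha,\beta}$ (together with the implicit kernel hypothesis on $k_\beta$) unlocks Theorem~\ref{thm:gfip:Kop} or Theorem~\ref{thm:IPL1}. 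Once those are secured, the remainder of the argument is mechanical.
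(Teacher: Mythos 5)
Your proposal is correct and follows essentially the same route as the paper's own proof: perturb by $y+\varepsilon\eta$ with $\eta(a)=\eta(b)=0$, compute the first variation, remove $\eta'$, $B_{P_1}^\alpha\eta$ and $K_{P_2}^\beta\eta$ via classical integration by parts, formula \eqref{eq:fip:2} of Theorem~\ref{thm:gfip} and Theorem~\ref{thm:gfip:Kop} respectively, and conclude with the fundamental lemma of the calculus of variations. Your explicit accounting of which regularity hypothesis licenses each integration-by-parts step is a welcome addition but does not change the argument.
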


\begin{proof}
Suppose that $y$ is an extremizer of $\mathcal{J}$.
Consider the value of $\mathcal{J}$ at a nearby admissible function
$\hat{y}(t)=y(t)+\varepsilon\eta(t)$,
where $\varepsilon\in\mathbb{R}$ is a small parameter
and $\eta\in Lip\left([a,b];\mathbb{R}\right)$ is an arbitrary
function satisfying $\eta(a)=\eta(b)=0$. Let
$J(\varepsilon) := \mathcal{J}[\hat{y}]
=\int\limits_a^b F\left\{y+\varepsilon\eta\right\}_{P_1, P_2}^{\alpha,\beta}(t) dt$.
A necessary condition for $y$ to be an extremizer is given by $J'(0) = 0$, \textrm{i.e.},
\begin{multline}
\label{eq:33}
\int\limits_a^b
\Biggl(\partial_2 F \left\{y\right\}_{P_1, P_2}^{\alpha,\beta}(t)\cdot\eta(t)
+\partial_3 F \left\{y\right\}_{P_1, P_2}^{\alpha,\beta}(t)\frac{d}{dt}\eta(t)\\
+ \partial_4 F\left\{y\right\}_{P_1, P_2}^{\alpha,\beta}(t) B_{P_1}^\alpha\eta(t)
+\partial_5 F \left\{y\right\}_{P_1, P_2}^{\alpha,\beta}(t)\cdot K_{P_2}^\beta \eta(t)\Biggr)dt = 0.
\end{multline}
Using the classical integration by parts formula as well as our generalized fractional
versions (Theorems \ref{thm:gfip:Kop}, \ref{thm:IPL1} and \ref{thm:gfip}) we obtain that
\begin{equation*}
\int_a^b\partial_3F\frac{d\eta}{dt}dt=\left.\partial_3F\eta\right|_a^b
-\int_a^b\left(\eta\frac{d}{dt}\partial_3F\right)dt,
\end{equation*}
\begin{equation*}
\int\limits_a^b\partial_4 F B_{P_1}^\alpha\eta dt
=-\int\limits_a^b\eta A_{P_1^*}^\alpha\partial_4 F dt
+\left.\eta K_{P_1^*}^{1-\alpha}\partial_4 F\right|_a^b ,
\end{equation*}
and
\begin{equation*}
\int\limits_a^b\partial_5 F K_{P_2}^\beta\eta dt
=\int\limits_a^b
\eta K_{P_2^*}^\beta\partial_5 F dt,
\end{equation*}
where $P_j^*=\langle a,t,b,q_j,p_j\rangle$, $j=1,2$,
is the dual of $P_j$. Because $\eta(a)=\eta(b)=0$,
\eqref{eq:33} simplifies to
\begin{multline*}
\int_a^b\eta(t)\Biggl(\partial_2 F \left\{y\right\}_{P_1, P_2}^{\alpha,\beta}(t)
-\frac{d}{dt}\partial_3 F\left\{y\right\}_{P_1, P_2}^{\alpha,\beta}(t)
-A_{P_1^*}^\alpha\partial_4 F\left\{y\right\}_{P_1, P_2}^{\alpha,\beta}(t)\\
+K_{P_2^*}^\beta\partial_5 F\left\{y\right\}_{P_1, P_2}^{\alpha,\beta}(t)\Biggr)dt=0.
\end{multline*}
We obtain \eqref{eq:eqELCaputo} applying the fundamental
lemma of the calculus of variations.
\end{proof}

\begin{remark}
If the functional \eqref{eq:31} does not depend on $K_{P_2}^\beta y(t)$
and $B_{P_1}^\alpha y(t)$, then Theorem~\ref{theorem:ELCaputo}
reduces to the classical result: if $y$ is a solution to the problem
\begin{equation*}
\int\limits_a^b F\left(t,y(t),y'(t)\right)dt \longrightarrow \textrm{extr},
\quad y(a)=y_a, \quad y(b)=y_b,
\end{equation*}
then $y$ satisfies the Euler--Lagrange equation
\begin{equation}
\label{eq:CEL}
\partial_2 F\left(t,y(t),y'(t)\right)
- \frac{d}{dt} \partial_3 F\left(t,y(t),y'(t)\right)=0,
\end{equation}
$t \in [a,b]$.
\end{remark}

\begin{remark}
In the particular case when functional \eqref{eq:31}
does not depend on the integer derivative of function $y$,
we obtain from Theorem~\ref{theorem:ELCaputo} the following result:
if $y$ is a solution to the problem of extremizing
\begin{equation*}
\mathcal{J}[y]=\int\limits_a^b
F\left(t,y(t),B_{P_1}^\alpha y(t),K_{P_2}^\beta y(t)\right)dt
\end{equation*}
subject to $y(a)=y_a$ and $y(b)=y_b$,
where $\alpha,\beta \in (0,1)$ and $P_j=\langle a,t,b,p_j,q_j\rangle$,
$j = 1, 2$, then $\partial_2 F-A_{P_1^*}^\alpha\partial_3 F
+ K_{P_2^*}^\beta\partial_4 F=0$ with $P_j^*= \langle a,t,b,q_j,p_j\rangle$,
$j=1,2$. This extends some of the recent results of \cite{Shakoor:01}.
\end{remark}

\begin{remark}
\label{rem:AintoB}
The optimization problem \eqref{eq:31}--\eqref{eq:32} does not involve
the generalized Riemann--Liouville fractional derivative $A$-op
while the necessary optimality condition \eqref{eq:eqELCaputo} does.
However, using Theorem~\ref{thm:rel},
the Euler--Lagrange equation \eqref{eq:eqELCaputo}
can be written in terms of $B$-op as
\begin{multline*}
\partial_2 F \left\{y\right\}_{P_1, P_2}^{\alpha,\beta}(t)
-\frac{d}{dt}\partial_3 F\left\{y\right\}_{P_1, P_2}^{\alpha,\beta}(t)
-q \, \partial_4 F\left\{y\right\}_{P_1, P_2}^{\alpha,\beta}(a)k_{1-\alpha}(t,a)\\
+ p \, \partial_4 F\left\{y\right\}_{P_1, P_2}^{\alpha,\beta}(b) \,
k_{1-\alpha}(b,t) - B_{P_1^*}^\alpha\partial_4 F\left\{y\right\}_{P_1, P_2}^{\alpha,\beta}(t)
+K_{P_2^*}^\beta\partial_5 F\left\{y\right\}_{P_1, P_2}^{\alpha,\beta}(t)=0.
\end{multline*}
\end{remark}

\begin{corollary}
Let $0<\alpha,\beta<1$. If $y$ is a solution to the problem
\begin{equation*}
\int\limits_a^b F\left(t,y(t),y'(t), {_{a}^{C}\textsl{D}_t^\alpha} y(t),
{_{a}\textsl{I}_t}^\beta y(t)\right)dt \longrightarrow \min_{y\in Lip},
\quad y(a)=y_a, \quad y(b)=y_b,
\end{equation*}
then the following Euler--Lagrange equation holds:
\begin{multline*}
{_{t}}\textsl{D}_b^\alpha \partial_4 F\left(t,y(t),y'(t),
_{a}^{C}\textsl{D}_t^\alpha y(t),_{a}\textsl{I}_t^\beta y(t)\right)
+{_{t}}\textsl{I}_b^\beta \partial_5 F\left(t,y(t),y'(t),
_{a}^{C}\textsl{D}_t^\alpha y(t),_{a}\textsl{I}_t^\beta y(t)\right)\\
+ \partial_2 F\left(t,y(t),y'(t), _{a}^{C}\textsl{D}_t^\alpha y(t),
_{a}\textsl{I}_t^\beta y(t)\right) - \frac{d}{dt}\partial_3 F\left(t,y(t),y'(t),
_{a}^{C}\textsl{D}_t^\alpha y(t),_{a}\textsl{I}_t^\beta y(t)\right) = 0.
\end{multline*}
\end{corollary}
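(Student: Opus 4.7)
The plan is to obtain the corollary as a direct specialization of Theorem~\ref{theorem:ELCaputo}, by choosing the parameter sets and kernels that reduce the generalized operators $B_{P_1}^\alpha$ and $K_{P_2}^\beta$ to the left Caputo derivative and the left Riemann--Liouville integral, respectively, and then identifying the dual operators that appear in the Euler--Lagrange equation with their classical right-sided counterparts.

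Concretely, I would set $P_1 = P_2 = \langle a,t,b,1,0\rangle$, take $k_{1-\alpha}(t-\tau) = \frac{1}{\Gamma(1-\alpha)}(t-\tau)^{-\alpha}$ for the kernel underlying $B_{P_1}^\alpha$, and $k_\beta(t-\tau) = \frac{1}{\Gamma(\beta)}(t-\tau)^{\beta-1}$ for the kernel underlying $K_{P_2}^\beta$. Under these choices, the remarks following Definitions~\ref{def:GRL} and~\ref{def:GC} give $B_{P_1}^\alpha y(t) = {^{C}_{a}}\textsl{D}^{\alpha}_{t} y(t)$ and $K_{P_2}^\beta y(t) = {_{a}}\textsl{I}^{\beta}_{t} y(t)$, so the functional in the corollary is exactly \eqref{eq:31} for this data, and the regularity hypotheses on $F$ in Theorem~\ref{theorem:ELCaputo} are satisfied. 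Hence Theorem~\ref{theorem:ELCaputo} applies and yields the equation
\begin{equation*}
\partial_2 F - \frac{d}{dt}\partial_3 F - A_{P_1^*}^\alpha \partial_4 F + K_{P_2^*}^\beta \partial_5 F = 0,
\end{equation*}
where $P_1^* = P_2^* = \langle a,t,b,0,1\rangle$.

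It remains to translate the dual operators back into classical notation. By the same remarks, with $P_2^* = \langle a,t,b,0,1\rangle$ one has $K_{P_2^*}^\beta g(t) = {_{t}}\textsl{I}^{\beta}_{b} g(t)$, and with $P_1^* = \langle a,t,b,0,1\rangle$ one has $A_{P_1^*}^\alpha g(t) = -{_{t}}\textsl{D}^{\alpha}_{b} g(t)$. Substituting these identifications into the Euler--Lagrange equation above, the minus sign in front of $A_{P_1^*}^\alpha$ cancels the minus sign in $A_{P_1^*}^\alpha = -{_{t}}\textsl{D}^{\alpha}_{b}$, producing exactly the stated formula.

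The only step requiring care is the bookkeeping of signs, in particular remembering that $A_P^\alpha$ with $p$-set $\langle a,t,b,0,1\rangle$ equals the \emph{negative} of the right Riemann--Liouville fractional derivative; apart from that, the proof is a purely notational specialization and there is no genuine obstacle.
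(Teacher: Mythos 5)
Your proposal is correct and follows essentially the same route as the paper: specialize Theorem~\ref{theorem:ELCaputo} with $P_1=P_2=\langle a,t,b,1,0\rangle$ and the Riemann--Liouville kernels, then identify the dual operators $A_{P_1^*}^\alpha$ and $K_{P_2^*}^\beta$ with $-{_{t}}\textsl{D}_b^\alpha$ and ${_{t}}\textsl{I}_b^\beta$. Your explicit tracking of the sign in $A_{P_1^*}^\alpha = -{_{t}}\textsl{D}_b^\alpha$ is exactly the point the paper leaves implicit, and it is handled correctly.
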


\begin{proof}
The intended Euler--Lagrange equation follows
from \eqref{eq:eqELCaputo} by choosing the $p$-sets
$P_1=P_2=\langle a,t,b,1,0\rangle$ and the kernel
$k_{1-\alpha}(t-\tau)=\frac{1}{\Gamma(1-\alpha)}(t-\tau)^{-\alpha}$.
\end{proof}

\begin{corollary}
Let $\mathcal{J}$ be the functional
$$
\mathcal{J}[y] = \int\limits_a^b F\left(t,y(t),y'(t),
p \, _{a}^{C}\textsl{D}_t^\alpha y(t)
+q \, _{t}^{C}\textsl{D}_b^\alpha y(t)\right)dt,
$$
where $p$ and $q$ are real numbers,
and $y$ be an extremizer of $\mathcal{J}$ satisfying boundary conditions
$y(a)=y_a$ and $y(b)=y_b$. Then, $y$ satisfies the Euler--Lagrange equation
\begin{equation}
\label{eq:35}
p \, {_{t}}\textsl{D}_b^\alpha \partial_4 F
+ q \, {_{a}}\textsl{D}_t^\alpha \partial_4 F
+\partial_2 F - \frac{d}{dt}\partial_3 F = 0
\end{equation}
with functions evaluated at
$\left(t,y(t),y'(t), p \, _{a}^{C}\textsl{D}_t^\alpha y(t)
+q \, _{t}^{C}\textsl{D}_b^\alpha y(t)\right)$, $t \in [a,b]$.
\end{corollary}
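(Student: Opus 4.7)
The plan is to reduce the corollary to an application of Theorem~\ref{theorem:ELCaputo} by viewing the combined derivative $p\,{^{C}_{a}}\textsl{D}^{\alpha}_{t}y+q\,{^{C}_{t}}\textsl{D}^{\alpha}_{b}y$ as a single generalized Caputo operator $B_{P_1}^\alpha y$ for an appropriate $p$-set $P_1$, and then translating the resulting $A$-op back to the classical Riemann--Liouville derivatives.

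First I would fix the kernel $k_{1-\alpha}(t-\tau)=\frac{1}{\Gamma(1-\alpha)}(t-\tau)^{-\alpha}$ and unfold $B_P^\alpha=K_P^{1-\alpha}\circ D$ to obtain, for $P=\langle a,t,b,p',q'\rangle$,
$$
B_P^\alpha f(t)=p'\,{^{C}_{a}}\textsl{D}^{\alpha}_{t}f(t)-q'\,{^{C}_{t}}\textsl{D}^{\alpha}_{b}f(t).
$$
The minus sign merely reflects the standard convention by which the right-sided Caputo derivative carries an extra $-1$ compared to the right half of the $K$-op. Hence the choice $P_1=\langle a,t,b,p,-q\rangle$ rewrites $\mathcal{J}$ as $\int_a^b F(t,y,y',B_{P_1}^\alpha y)\,dt$, which fits directly into Theorem~\ref{theorem:ELCaputo} with no $K_{P_2}^\beta$-dependence. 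That theorem then yields
$$
\partial_2 F-\frac{d}{dt}\partial_3 F-A_{P_1^*}^\alpha\partial_4 F=0,
$$
where $P_1^*=\langle a,t,b,-q,p\rangle$.

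The last step is to rewrite $A_{P_1^*}^\alpha\partial_4 F$ in terms of classical operators. Unfolding $A_P^\alpha=D\circ K_P^{1-\alpha}$ with the same kernel gives $A_P^\alpha f=p'\,{_{a}}\textsl{D}^{\alpha}_{t}f-q'\,{_{t}}\textsl{D}^{\alpha}_{b}f$; specializing to $P=P_1^*$ produces
$$
A_{P_1^*}^\alpha\partial_4 F=-q\,{_{a}}\textsl{D}^{\alpha}_{t}\partial_4 F-p\,{_{t}}\textsl{D}^{\alpha}_{b}\partial_4 F,
$$
and substituting into the previous display immediately delivers \eqref{eq:35}.

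I do not expect any serious obstacle; the only real subtlety is the sign bookkeeping between the second entry of the $p$-set and the coefficient $q$ of the right Caputo term, which differ by a minus sign because of how ${^{C}_{t}}\textsl{D}^{\alpha}_{b}$ is normalized. Once the identification $P_1=\langle a,t,b,p,-q\rangle$ is in place, the proof is a mechanical specialization of Theorem~\ref{theorem:ELCaputo}, entirely parallel to the preceding corollary that treats the purely left-sided case $P_1=\langle a,t,b,1,0\rangle$.
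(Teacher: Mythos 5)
Your proposal is correct and follows exactly the paper's route: choose $P_1=\langle a,t,b,p,-q\rangle$ with the Riemann--Liouville kernel $k_{1-\alpha}(t-\tau)=\frac{1}{\Gamma(1-\alpha)}(t-\tau)^{-\alpha}$, apply Theorem~\ref{theorem:ELCaputo} with no $K$-op dependence, and unfold $A_{P_1^*}^\alpha$ with $P_1^*=\langle a,t,b,-q,p\rangle$ into the classical Riemann--Liouville derivatives. Your sign bookkeeping (the extra minus in the right-sided operators) is exactly right; the paper's proof is just a terser statement of the same specialization.
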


\begin{proof}
Choose $P_1=\langle a,t,b,p,-q\rangle$ and $k_{1-\alpha}(t-\tau)
=\frac{1}{\Gamma(1-\alpha)}(t-\tau)^{-\alpha}$.
Then the $B$-op reduces to the sum of the left and right Caputo fractional derivatives
and \eqref{eq:35} follows from \eqref{eq:eqELCaputo}.
\end{proof}


\subsubsection{Free initial boundary}
\label{sec:fib}

Let in problem \eqref{eq:31}--\eqref{eq:32} the value
of the unknown function $y$ be not preassigned
at the initial point $t=a$, \textrm{i.e.},
\begin{equation}
\label{eq:NatBound2}
y(a) \textnormal{ is free } \textnormal{ and } y(b)=y_b.
\end{equation}
Then, we do not require $\eta$ in the proof
of Theorem~\ref{theorem:ELCaputo} to vanish at $t=a$.
Therefore, following the proof
of Theorem~\ref{theorem:ELCaputo}, we obtain
\begin{multline}
\label{eq:36}
\eta(a)\partial_3 F\left\{y\right\}_{P_1, P_2}^{\alpha,\beta}(a)
+\eta(a)\left.K_{P_1^*}^{1-\alpha}\partial_4
F\left\{y\right\}_{P_1, P_2}^{\alpha,\beta}(t)\right|_{t=a}\\
+\int_a^b\eta(t)\Biggl(\partial_2 F \left\{y\right\}_{P_1, P_2}^{\alpha,\beta}(t)
-\frac{d}{dt}\partial_3 F\left\{y\right\}_{P_1, P_2}^{\alpha,\beta}(t)
-A_{P_1^*}^\alpha\partial_4 F\left\{y\right\}_{P_1, P_2}^{\alpha,\beta}(t)\\
+K_{P_2^*}^\beta\partial_5 F\left\{y\right\}_{P_1, P_2}^{\alpha,\beta}(t)\Biggr)dt=0
\end{multline}
for every admissible $\eta\in Lip([a,b];\mathbb{R})$ with $\eta(b)=0$.
In particular, condition \eqref{eq:36} holds for those $\eta$ that fulfill $\eta(a)=0$.
Hence, by the fundamental lemma of the calculus of variations, equation \eqref{eq:eqELCaputo}
is satisfied. Now, let us return to \eqref{eq:36} and let $\eta$
again be arbitrary at point $t=a$. Using equation \eqref{eq:eqELCaputo},
we obtain the following natural boundary condition:
\begin{equation}
\label{eq:NatBoundCond2}
\partial_3 F\left\{y\right\}_{P_1, P_2}^{\alpha,\beta}(a)
+\left.K_{P_1^*}^{1-\alpha}\partial_4
F\left\{y\right\}_{P_1, P_2}^{\alpha,\beta}(t)\right|_{t=a}=0.
\end{equation}

We just obtained the following result.

\begin{theorem}
\label{thm:tc:a}
If $y \in Lip([a,b];\mathbb{R})$ is an extremizer of \eqref{eq:31}
subject to $y(b) = y_b$, then $y$ satisfies the Euler--Lagrange
equation \eqref{eq:eqELCaputo} and the transversality condition
\eqref{eq:NatBoundCond2}.
\end{theorem}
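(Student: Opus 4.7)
The plan is to repeat the first variation argument used in the proof of Theorem~\ref{theorem:ELCaputo}, but to work in the larger class of admissible variations $\eta \in Lip([a,b];\mathbb{R})$ satisfying only the condition $\eta(b)=0$, without the requirement $\eta(a)=0$. I would set $J(\varepsilon):=\mathcal{J}[y+\varepsilon\eta]$, compute $J'(0)$, and impose the necessary condition $J'(0)=0$. The integrand contains four terms, coming from $\partial_2 F\cdot\eta$, $\partial_3 F\cdot \eta'$, $\partial_4 F\cdot B_{P_1}^\alpha\eta$, and $\partial_5 F\cdot K_{P_2}^\beta\eta$, evaluated along $\left\{y\right\}_{P_1,P_2}^{\alpha,\beta}$.

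Next I would apply the relevant integration by parts formulas to transfer the derivatives and fractional operators off $\eta$, carefully tracking all boundary contributions that were previously discarded. Classical integration by parts on $\partial_3 F\cdot\eta'$ produces the boundary term $\left.\partial_3 F\,\eta\right|_a^b$; the fractional integration by parts for $B$-op (formula \eqref{eq:fip:2} of Theorem~\ref{thm:gfip}) applied to the $\partial_4 F\cdot B_{P_1}^\alpha\eta$ term contributes $\left.\eta\, K_{P_1^*}^{1-\alpha}\partial_4 F\right|_a^b$; and the integration by parts for $K$-op (Theorem~\ref{thm:gfip:Kop} or Theorem~\ref{thm:IPL1}), applied to the $\partial_5 F\cdot K_{P_2}^\beta\eta$ term, contributes no boundary term. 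Since $\eta(b)=0$ but $\eta(a)$ is free, the surviving boundary contributions collapse precisely to the $\eta(a)$-terms displayed in the identity \eqref{eq:36}.

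I would then exploit the arbitrariness of $\eta$ in two stages. First, restrict to variations satisfying in addition $\eta(a)=0$; all boundary contributions then disappear, and the fundamental lemma of the calculus of variations applied to the resulting integral gives exactly the Euler--Lagrange equation \eqref{eq:eqELCaputo}. Second, return to \eqref{eq:36} with $\eta(a)$ arbitrary: the integral portion vanishes by virtue of \eqref{eq:eqELCaputo}, leaving an identity of the form $\eta(a)\cdot(\cdots)=0$ which must hold for every real value of $\eta(a)$. The coefficient in parentheses is then forced to vanish, yielding the transversality condition \eqref{eq:NatBoundCond2}.

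No serious obstacle is anticipated: the argument is essentially careful bookkeeping of the boundary terms produced by the fractional integration by parts formulas, followed by a standard two-step application of the fundamental lemma. The only delicate point is to confirm that the regularity assumptions placed on $t\mapsto\partial_3 F\left\{y\right\}_{P_1,P_2}^{\alpha,\beta}(t)$ and $t\mapsto\partial_4 F\left\{y\right\}_{P_1,P_2}^{\alpha,\beta}(t)$ before Theorem~\ref{theorem:ELCaputo}, together with $y\in Lip([a,b];\mathbb{R})$, ensure that the boundary evaluations at $t=a$ of the quantities $\partial_3 F\left\{y\right\}_{P_1,P_2}^{\alpha,\beta}$ and $K_{P_1^*}^{1-\alpha}\partial_4 F\left\{y\right\}_{P_1,P_2}^{\alpha,\beta}$ are well defined; these are guaranteed by the continuity and absolute continuity hypotheses already in force.
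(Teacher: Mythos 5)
Your proposal is correct and follows essentially the same route as the paper: the paper likewise reruns the first-variation argument of Theorem~\ref{theorem:ELCaputo} with $\eta(b)=0$ but $\eta(a)$ free, arrives at the identity \eqref{eq:36}, first restricts to $\eta(a)=0$ to extract \eqref{eq:eqELCaputo} via the fundamental lemma, and then lets $\eta(a)$ be arbitrary to force the coefficient of $\eta(a)$ to vanish, giving \eqref{eq:NatBoundCond2}. Your accounting of which integration by parts formulas contribute boundary terms (classical and $B$-op do, $K$-op does not) matches the paper exactly.
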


\begin{corollary}[\textrm{cf.} Theorem 2.3 of \cite{Isoperimetric}]
Let $\mathcal{J}$ be the functional given by
\begin{equation*}
\mathcal{J}[y]=\int\limits_a^b
F\left(t,y(t),_{a}^{C}\textsl{D}_t^\alpha y(t)\right)dt.
\end{equation*}
If $y$ is a local minimizer of $\mathcal{J}$ satisfying the boundary
condition $y(b)=y_b$, then $y$ satisfies the Euler--Lagrange equation
\begin{equation}
\label{eq:37}
\partial_2 F\left(t,y(t),_{a}^{C}\textsl{D}_t^\alpha y(t)\right)
+_{t}\textsl{D}_b^\alpha\partial_3 F\left(t,y(t),_{a}^{C}\textsl{D}_t^\alpha y(t)\right)=0
\end{equation}
and the natural boundary condition
\begin{equation}
\label{eq:38}
\left._{t}\textsl{I}_b^{1-\alpha}\partial_3
F\left(t,y(t),_{a}^{C}\textsl{D}_t^\alpha y(t)\right)\right|_{t=a}=0.
\end{equation}
\end{corollary}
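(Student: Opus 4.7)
The plan is to derive this corollary as a direct specialization of Theorem~\ref{thm:tc:a}, by choosing the parameter set and kernel that recover the classical left Caputo derivative as $B_{P_1}^\alpha$ and noting that several terms in the general Euler--Lagrange equation and transversality condition drop out because the Lagrangian depends only on $(t,y,{^C_a}D_t^\alpha y)$.

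First I would set $k_{1-\alpha}(t-\tau)=\frac{1}{\Gamma(1-\alpha)}(t-\tau)^{-\alpha}$ and choose $P_1=\langle a,t,b,1,0\rangle$, so that $B_{P_1}^\alpha y(t)={^C_a}D_t^\alpha y(t)$ by the remark following Definition~\ref{def:GC}. With this choice the dual is $P_1^*=\langle a,t,b,0,1\rangle$, and the same remark gives the identifications $A_{P_1^*}^\alpha=-{_t}D_b^\alpha$ and $K_{P_1^*}^{1-\alpha}={_t}I_b^{1-\alpha}$. Since the integrand here does not involve $y'(t)$ or any generalized fractional integral, we view $\mathcal{J}$ as a special case of \eqref{eq:31} in which the $y'$-slot and the $K_{P_2}^\beta y$-slot are present as dummy variables on which $F$ does not actually depend; consequently $\partial_3 F\equiv 0$ and $\partial_5 F\equiv 0$ in the notation of Theorem~\ref{thm:tc:a}.

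Next I would substitute these vanishings into the Euler--Lagrange equation \eqref{eq:eqELCaputo}. The $\frac{d}{dt}\partial_3 F$ and $K_{P_2^*}^\beta\partial_5 F$ terms disappear, and with the index shift that occurs when the absent slots are removed, $\partial_4 F$ relabels as $\partial_3 F$. Using $-A_{P_1^*}^\alpha={_t}D_b^\alpha$ we obtain precisely \eqref{eq:37}. Similarly, plugging the same choices into the transversality condition \eqref{eq:NatBoundCond2} kills the first term (as $\partial_3 F=0$ with respect to the $y'$-slot) and turns the surviving term into $\left.{_t}I_b^{1-\alpha}\partial_3 F\right|_{t=a}=0$, which is \eqref{eq:38}.

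There is no substantive obstacle here: the whole proof is a bookkeeping exercise in matching the generalized notation of Theorem~\ref{thm:tc:a} to the classical Caputo setting, the only delicate point being the index shift in the partial derivatives of $F$ and the sign conventions that produce $+{_t}D_b^\alpha$ from $-A_{P_1^*}^\alpha$ and $+{_t}I_b^{1-\alpha}$ from $K_{P_1^*}^{1-\alpha}$ under the chosen dual $p$-set.
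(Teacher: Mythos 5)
Your proposal is correct and follows essentially the same route as the paper: the paper likewise specializes Theorem~\ref{thm:tc:a} (i.e., equations \eqref{eq:eqELCaputo} and \eqref{eq:NatBoundCond2}) by taking $k_{1-\alpha}(t-\tau)=\frac{1}{\Gamma(1-\alpha)}(t-\tau)^{-\alpha}$ and $P_1=\langle a,t,b,1,0\rangle$, dropping the $y'$ and $K$-op dependence. Your additional bookkeeping on the dual $p$-set, the sign identity $-A_{P_1^*}^\alpha={_t}\textsl{D}_b^\alpha$, and the index relabeling is exactly the verification the paper leaves implicit.
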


\begin{proof}
Let functional \eqref{eq:31} be such that it does not depend on the integer
derivative $y'(t)$ and on $K$-op. If $P_1=\langle a,t,b,1,0\rangle$ and
$k_{1-\alpha}(t-\tau)=\frac{1}{\Gamma(1-\alpha)}(t-\tau)^{-\alpha}$,
then $B$-op reduces to the left Caputo fractional derivative and from
\eqref{eq:eqELCaputo} and \eqref{eq:NatBoundCond2}
we deduce \eqref{eq:37} and \eqref{eq:38}, respectively.
\end{proof}

\begin{remark}
Observe that if the functional \eqref{eq:31} is independent of $K$-op,
then the problem defined by \eqref{eq:31} and \eqref{eq:NatBound2}
takes the form
\begin{equation*}
\int\limits_a^b F\left(t,y(t),y'(t),B_{P_1}^\alpha y(t)\right)dt
\longrightarrow \textrm{extr}, \quad y(b)=y_b
\end{equation*}
($y(a)$ free) and the optimality conditions \eqref{eq:eqELCaputo}
and \eqref{eq:NatBoundCond2} reduce respectively to
\begin{multline*}
\partial_2 F \left(t,y(t),y'(t),B_{P_1}^\alpha y(t)\right)
-\frac{d}{dt}\partial_3 F\left(t,y(t),y'(t),B_{P_1}^\alpha y(t)\right)\\
- A_{P_1^*}^\alpha\partial_4 F\left(t,y(t),y'(t),B_{P_1}^\alpha y(t)\right)=0
\end{multline*}
and $\partial_3 F\left(a,y(a),y'(a),B_{P_1}^\alpha y(a)\right)
+\left.K_{P_1^*}^{1-\alpha}\partial_4
F\left(t,y(t),y'(t),B_{P_1}^\alpha y(t)\right)\right|_{t=a}=0$.
\end{remark}


\subsubsection{Isoperimetric problems}
\label{sub:gen:iso:fp}

One of the earliest problems in geometry was the isoperimetric problem,
already considered by the ancient Greeks. It consists to find,
among all closed curves of a given length, the one which encloses the
maximum area. The general problem for which one integral is to be given
a fixed value, while another is to be made a maximum or a minimum,
is nowadays part of the calculus of variations \cite{BorisBookI,BorisBookII}.
Such \emph{isoperimetric problems} have found a broad class of important applications
throughout the centuries, with numerous useful implications
in astronomy, geometry, algebra, analysis,
and engineering \cite{Viktor,Curtis}.
For recent advancements on the study of isoperimetric problems
see \cite{isoJMAA,isoNabla,iso:ts} and references therein.
Here we consider isoperimetric problems
with generalized fractional operators.
Similarly to Sections~\ref{sec:fp} and \ref{sec:fib},
we deal with integrands involving both generalized
Caputo fractional derivatives and generalized fractional integrals,
as well as the classical derivative.

Let $0<\alpha,\beta<1$ and $P_j=\langle a,t,b,p_j,q_j\rangle$,
$j=1,2$, be given $p$-sets. Consider the following isoperimetric problem:
\begin{gather}
\mathcal{J}[y]=\int\limits_a^b F
\left\{y\right\}_{P_1, P_2}^{\alpha,\beta}(t)dt \longrightarrow \textrm{extr},\label{eq:IsoFunct1} \\
y(a)=y_a, \quad y(b)=y_b, \label{eq:IsoBoun1}\\
\mathcal{I}[y]=\int\limits_a^b G\left\{y\right\}_{P_1, P_2}^{\alpha,\beta}(t)dt= \xi. \label{eq:IsoConstr1}
\end{gather}

\begin{definition}
A Lipschitz function $y : [a,b]\to\mathbb R$
is said to be \emph{admissible} for problem
\eqref{eq:IsoFunct1}--\eqref{eq:IsoConstr1}
if it satisfies the given boundary conditions \eqref{eq:IsoBoun1}
and the isoperimetric constraint \eqref{eq:IsoConstr1}.
\end{definition}

We assume that $F,G\in C^1([a,b] \times \mathbb{R}^4;\mathbb{R})$,
$\xi$ is a specified real constant, functions
$t \mapsto \partial_4 F\left\{y\right\}_{P_1,P_2}^{\alpha,\beta}(t)$
and $t \mapsto \partial_4 G\left\{y\right\}_{P_1,P_2}^{\alpha,\beta}(t)$
are absolutely continuous and have continuous derivatives $A_{P_1^*}^\alpha$,
and functions $t \mapsto \partial_3 F\left\{y\right\}_{P_1,P_2}^{\alpha,\beta}(t)$
and $t \mapsto \partial_3 G\left\{y\right\}_{P_1,P_2}^{\alpha,\beta}(t)$
have continuous derivatives $\frac{d}{dt}$.

\begin{definition}
An admissible function $y\in Lip\left([a,b],\mathbb{R}\right)$
is an \emph{extremal} for $\mathcal{I}$ if it satisfies
the Euler--Lagrange equation \eqref{eq:eqELCaputo}
associated with \eqref{eq:IsoConstr1}, \textrm{i.e.},
\begin{multline*}
\partial_2 G \left\{y\right\}_{P_1, P_2}^{\alpha,\beta}(t)
-\frac{d}{dt}\partial_3 G\left\{y\right\}_{P_1, P_2}^{\alpha,\beta}(t)
-A_{P_1^*}^\alpha\partial_4 G\left\{y\right\}_{P_1, P_2}^{\alpha,\beta}(t)\\
+K_{P_2^*}^\beta\partial_5 G\left\{y\right\}_{P_1, P_2}^{\alpha,\beta}(t)=0,
\end{multline*}
where $P_j^*=\langle a,t,b,q_j,p_j\rangle$, $j=1,2$, and $t\in[a,b]$.
\end{definition}

The next theorem gives a necessary optimality condition for the
generalized fractional isoperimetric problem
\eqref{eq:IsoFunct1}--\eqref{eq:IsoConstr1}.

\begin{theorem}
\label{theorem:EL2}
If $y$ is a solution to the isoperimetric problem
\eqref{eq:IsoFunct1}--\eqref{eq:IsoConstr1} and
is not an extremal for $\mathcal{I}$, then
there exists a real constant $\lambda$ such that
\begin{multline}
\label{eq:eqEL2}
\partial_2 H \left\{y\right\}_{P_1, P_2}^{\alpha,\beta}(t)
-\frac{d}{dt}\partial_3 H\left\{y\right\}_{P_1, P_2}^{\alpha,\beta}(t)
-A_{P_1^*}^\alpha\partial_4 H\left\{y\right\}_{P_1, P_2}^{\alpha,\beta}(t)\\
+K_{P_2^*}^\beta\partial_5 H\left\{y\right\}_{P_1, P_2}^{\alpha,\beta}(t)=0,
\end{multline}
$t\in[a,b]$, where $H(t,y,u,v,w)=F(t,y,u,v,w)-\lambda G(t,y,u,v,w)$.
\end{theorem}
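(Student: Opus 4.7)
The plan is to extend the proof of Theorem~\ref{theorem:ELCaputo} by means of the Lagrange multiplier technique for isoperimetric constraints. First, I would introduce a two-parameter family of admissible variations $\hat{y}(t) = y(t) + \varepsilon_1 \eta_1(t) + \varepsilon_2 \eta_2(t)$, where $\eta_1, \eta_2 \in Lip([a,b];\mathbb{R})$ satisfy $\eta_i(a) = \eta_i(b) = 0$ so that the boundary conditions~\eqref{eq:IsoBoun1} are automatically preserved. Setting $\widetilde{\mathcal{I}}(\varepsilon_1, \varepsilon_2) := \mathcal{I}[\hat{y}] - \xi$, admissibility gives $\widetilde{\mathcal{I}}(0,0) = 0$; the goal is to reduce the constrained problem to an unconstrained one along a curve in the $(\varepsilon_1,\varepsilon_2)$-plane.

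Differentiating $\widetilde{\mathcal{I}}$ in $\varepsilon_2$ at the origin and applying the integration by parts formulas of Theorems~\ref{thm:gfip:Kop}, \ref{thm:IPL1} and \ref{thm:gfip} together with $\eta_2(a)=\eta_2(b)=0$, one obtains
\begin{equation*}
\frac{\partial \widetilde{\mathcal{I}}}{\partial \varepsilon_2}\bigg|_{(0,0)} = \int_a^b \eta_2(t)\,\Phi_G(t)\,dt,
\end{equation*}
where $\Phi_G$ denotes the left-hand side of~\eqref{eq:eqEL2} with $H$ replaced by $G$. The hypothesis that $y$ is not an extremal for $\mathcal{I}$ means precisely that $\Phi_G \not\equiv 0$, so a $\eta_2$ can be chosen making the partial derivative above nonzero. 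The implicit function theorem then provides a $C^1$ function $\varepsilon_2 = \varepsilon_2(\varepsilon_1)$ defined in a neighborhood of $0$ satisfying $\widetilde{\mathcal{I}}(\varepsilon_1,\varepsilon_2(\varepsilon_1)) \equiv 0$, so $\hat{y}$ remains admissible along this curve.

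Since $y$ solves the isoperimetric problem, the scalar function $J(\varepsilon_1) := \mathcal{J}[\hat{y}]$ along this curve attains an extremum at $\varepsilon_1 = 0$, hence $J'(0) = 0$. Introducing the multiplier
\begin{equation*}
\lambda := \left(\frac{\partial \mathcal{J}}{\partial \varepsilon_2}\right)\bigg/\left(\frac{\partial \widetilde{\mathcal{I}}}{\partial \varepsilon_2}\right)\bigg|_{(0,0)}
\end{equation*}
and using implicit differentiation to compute $\varepsilon_2'(0)$, the chain rule allows $J'(0) = 0$ to be recast as $\partial_{\varepsilon_1}(\mathcal{J} - \lambda\,\mathcal{I})[\hat{y}]|_{(0,0)} = 0$. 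Setting $H = F - \lambda G$ and repeating verbatim the integration by parts manipulations of Theorem~\ref{theorem:ELCaputo} on $\int_a^b \partial_{\varepsilon_1} H\{\hat{y}\}_{P_1,P_2}^{\alpha,\beta}(t)\,dt$ at $\varepsilon_1 = 0$, one arrives at $\int_a^b \eta_1(t)\,\Phi_H(t)\,dt = 0$ for every admissible $\eta_1$, where $\Phi_H$ is the left-hand side of~\eqref{eq:eqEL2}. The fundamental lemma of the calculus of variations then yields~\eqref{eq:eqEL2} pointwise on $[a,b]$.

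The main obstacle is the non-degeneracy step: one must verify carefully that the assumption "$y$ is not an extremal for $\mathcal{I}$" produces an $\eta_2$ with $\partial \widetilde{\mathcal{I}}/\partial \varepsilon_2(0,0) \neq 0$, which legitimizes the invocation of the implicit function theorem and the definition of $\lambda$. Once this is secured, the rest is routine differentiation under the integral sign combined with the generalized fractional integration by parts formulas already established in Section~\ref{sec:GFIP}.
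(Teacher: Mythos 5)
Your proposal is correct and follows essentially the same route as the paper: a two-parameter variation, the non-extremality of $y$ for $\mathcal{I}$ yielding a nonzero $\partial\hat{I}/\partial\varepsilon_2$ at the origin, the implicit function theorem to stay on the constraint set, and the fundamental lemma applied to the $H=F-\lambda G$ combination. The only cosmetic difference is that you construct $\lambda$ explicitly as the ratio of $\varepsilon_2$-partials and invoke the chain rule, whereas the paper cites the abstract Lagrange multiplier rule for $\nabla(\hat{J}-\lambda\hat{I})(0,0)=0$; these are the same argument.
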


\begin{proof}
Consider a two-parameter family of the form
$\hat{y}=y+\varepsilon_1\eta_1+\varepsilon_2\eta_2$,
where for each $i\in\{1,2\}$ we have $\eta_i(a)=\eta_i(b)=0$.
First we show that we can select $\eta_2$ such that
$\hat{y}$ satisfies \eqref{eq:IsoConstr1}. Consider the quantity
\begin{multline*}
\mathcal{I}[\hat{y}]=\int\limits_a^b G\biggr(t,y(t)+\varepsilon_1\eta_1(t)
+\varepsilon_2\eta_2(t),\frac{d}{dt}\left(y(t)+\varepsilon_1\eta_1(t)
+\varepsilon_2\eta_2(t)\right),\\
B_{P_1}^\alpha \left(y(t)+\varepsilon_1\eta_1(t)
+\varepsilon_2\eta_2(t)\right),K_{P_2}^\beta \left(y(t)+\varepsilon_1\eta_1(t)
+\varepsilon_2\eta_2(t)\right)\biggr)dt.
\end{multline*}
Looking to $\mathcal{I}[\hat{y}]$ as a function
of $\varepsilon_1,\varepsilon_2$, we define
$\hat{I}(\varepsilon_1,\varepsilon_2):=\mathcal{I}[\hat{y}]-\xi$.
Thus, $\hat{I}(0,0)=0$.
On the other hand, integrating by parts, we obtain
\begin{multline*}
\left.\frac{\partial\hat{I}}{\partial\varepsilon_2}\right|_{(0,0)}
=\int\limits_a^b\eta_2(t)\biggl(\partial_2 G \left\{y\right\}_{P_1, P_2}^{\alpha,\beta}(t)
-\frac{d}{dt}\partial_3 G\left\{y\right\}_{P_1, P_2}^{\alpha,\beta}(t)\\
-A_{P_1^*}^\alpha\partial_4 G\left\{y\right\}_{P_1, P_2}^{\alpha,\beta}(t)
+K_{P_2^*}^\beta\partial_5 G\left\{y\right\}_{P_1, P_2}^{\alpha,\beta}(t)\biggr)dt,
\end{multline*}
where $P_j^*=\langle a,t,b,q_j,p_j\rangle$, $j=1,2$. We assumed
that $y$ is not an extremal for $\mathcal{I}$.
Hence, the fundamental lemma of the calculus of variations implies
that there exists a function $\eta_2$ such that
$\left.\frac{\partial\hat{I}}{\partial\varepsilon_2}\right|_{(0,0)}\neq 0$.
According to the implicit function theorem, there exists
a function $\varepsilon_2(\cdot)$ defined in a neighborhood of $0$ such that
$\hat{I}(\varepsilon_1,\varepsilon_2(\varepsilon_1))=0$.
Let $\hat{J}(\varepsilon_1,\varepsilon_2)=\mathcal{J}[\hat{y}]$.
Function $\hat{J}$ has an extremum at $(0,0)$ subject to $\hat{I}(0,0)=0$,
and we have proved that $\nabla\hat{I}(0,0)\neq 0$. The Lagrange multiplier rule
asserts that there exists a real number $\lambda$ such that
$\nabla(\hat{J}(0,0)-\lambda\hat{I}(0,0))=0$. Because
\begin{multline*}
\left.\frac{\partial\hat{J}}{\partial\varepsilon_1}\right|_{(0,0)}
=\int\limits_a^b\biggl(\partial_2 F \left\{y\right\}_{P_1, P_2}^{\alpha,\beta}(t)
-\frac{d}{dt}\partial_3 F\left\{y\right\}_{P_1, P_2}^{\alpha,\beta}(t)\\
-A_{P_1^*}^\alpha\partial_4 F\left\{y\right\}_{P_1, P_2}^{\alpha,\beta}(t)
+K_{P_2^*}^\beta\partial_5 F\left\{y\right\}_{P_1, P_2}^{\alpha,\beta}(t)\biggr)\eta_1(t) dt
\end{multline*}
and
\begin{multline*}
\left.\frac{\partial\hat{I}}{\partial\varepsilon_1} \right|_{(0,0)}
=\int\limits_a^b\biggl(\partial_2 G \left\{y\right\}_{P_1, P_2}^{\alpha,\beta}(t)
-\frac{d}{dt}\partial_3 G\left\{y\right\}_{P_1, P_2}^{\alpha,\beta}(t)\\
-A_{P_1^*}^\alpha\partial_4 G\left\{y\right\}_{P_1, P_2}^{\alpha,\beta}(t)
+K_{P_2^*}^\beta\partial_5 G\left\{y\right\}_{P_1, P_2}^{\alpha,\beta}(t)\biggr)\eta_1 (t)dt,
\end{multline*}
one has
\begin{multline*}
\int\limits_a^b\biggl(\partial_2 H \left\{y\right\}_{P_1, P_2}^{\alpha,\beta}(t)
-\frac{d}{dt}\partial_3 H\left\{y\right\}_{P_1, P_2}^{\alpha,\beta}(t)\\
-A_{P_1^*}^\alpha\partial_4 H\left\{y\right\}_{P_1, P_2}^{\alpha,\beta}(t)
+K_{P_2^*}^\beta\partial_5 H\left\{y\right\}_{P_1, P_2}^{\alpha,\beta}(t)\biggr)\eta_1(t) dt=0.
\end{multline*}
We get equation \eqref{eq:eqEL2} from the fundamental lemma of the calculus of variations.
\end{proof}

As particular cases of our problem \eqref{eq:IsoFunct1}--\eqref{eq:IsoConstr1},
one obtains previously studied fractional isoperimetric problems
with Caputo derivatives.

\begin{corollary}[\textrm{cf.} Theorem~3.3 of \cite{Isoperimetric}]
\label{IsoPro:RicDel}
Let $y$ be a local minimizer to
\begin{gather*}
\mathcal{J}[y]=\int_a^b L\left(t,y(t),\, {_a^C D_t^\alpha} y(t)\right) dt
\longrightarrow \min, \\
\mathcal{I}[y]=\int_a^b g\left(t,y(t),\, {_a^C D_t^\alpha} y(t)\right)dt=\xi,\\
y(a)=y_a,\ y(b)=y_b.
\end{gather*}
If $y$ is not an extremal of $\mathcal{I}$,
then there exists a constant $\lambda$ such that $y$ satisfies
$\partial_2 F\left(t,y(t),\, {_a^C D_t^\alpha} y(t)\right)
+{_tD_b^\alpha}
\partial_3 F\left(t,y(t),\, {_a^C D_t^\alpha} y(t)\right)=0$,
$t \in [a,b]$, with $F=L+\lambda g$.
\end{corollary}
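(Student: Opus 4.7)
The plan is to apply Theorem~\ref{theorem:EL2} directly, with the generalized isoperimetric problem \eqref{eq:IsoFunct1}--\eqref{eq:IsoConstr1} specialized to the case where (i) neither Lagrangian depends on $y'(t)$ nor on a generalized fractional integral, and (ii) the generalized Caputo operator $B_{P_1}^\alpha$ collapses to the classical left Caputo derivative ${}^C_a D_t^\alpha$. No new ideas are required; the work is entirely a matter of matching parameters.

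First I would choose the $p$-set $P_1=\langle a,t,b,1,0\rangle$ together with the kernel $k_{1-\alpha}(t-\tau)=\frac{1}{\Gamma(1-\alpha)}(t-\tau)^{-\alpha}$. By the remark following Definition~\ref{def:GC}, this gives $B_{P_1}^\alpha y(t)={}^C_a D_t^\alpha y(t)$. Viewing $L(t,y,v)$ and $g(t,y,v)$ as functions on $[a,b]\times\mathbb R^{4}$ that happen to be independent of their $u$ and $w$ arguments (those corresponding to $y'$ and to $K_{P_2}^\beta y$, with $P_2$ an arbitrary $p$-set), the hypotheses of Theorem~\ref{theorem:EL2} are satisfied for $F=L$ and $G=g$, and the isoperimetric conditions \eqref{eq:IsoBoun1}--\eqref{eq:IsoConstr1} coincide with those of the corollary.

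Second, I would compute the dual $P_1^{*}=\langle a,t,b,0,1\rangle$ and invoke the remark following Definition~\ref{def:GC} once more to identify $A_{P_1^{*}}^\alpha f(t)=-\,{}_t D_b^\alpha f(t)$. Substituting these identifications into the conclusion \eqref{eq:eqEL2} of Theorem~\ref{theorem:EL2} and noting that $\partial_3 H\equiv 0$ and $\partial_5 H\equiv 0$ (since $H=F-\lambda G$ is independent of its $u$ and $w$ arguments), equation \eqref{eq:eqEL2} reduces to
\begin{equation*}
\partial_2 H\bigl(t,y(t),{}^C_a D_t^\alpha y(t)\bigr)+{}_t D_b^\alpha \partial_3 H\bigl(t,y(t),{}^C_a D_t^\alpha y(t)\bigr)=0,
\end{equation*}
where now $\partial_3$ refers to the third slot of the three-argument function $H=L+\lambda g=F$ as used in the corollary (the slot that was $\partial_4$ in Theorem~\ref{theorem:EL2}). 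Writing $F=L+\lambda g$ gives precisely the stated Euler--Lagrange equation. The assumption that $y$ is not an extremal of $\mathcal I$ is inherited verbatim from Theorem~\ref{theorem:EL2}, so the existence of the multiplier $\lambda$ is automatic. The only real pitfall is notational: keeping track of which partial derivative index corresponds to which argument after trimming the Lagrangian from five variables down to three.
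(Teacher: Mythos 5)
Your proposal is correct and follows essentially the same route as the paper: specialize Theorem~\ref{theorem:EL2} with $P_1=\langle a,t,b,1,0\rangle$ and the Riemann--Liouville kernel, so that $B_{P_1}^\alpha$ becomes ${}^C_a D_t^\alpha$ and $-A_{P_1^*}^\alpha$ becomes ${}_t D_b^\alpha$, with the Lagrangians independent of $y'$ and of the $K$-op. The only cosmetic difference is that Theorem~\ref{theorem:EL2} produces $H=L-\lambda g$ while the corollary writes $F=L+\lambda g$, which is absorbed by relabelling $\lambda\mapsto-\lambda$.
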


\begin{proof}
The result follows from Theorem~\ref{theorem:EL2} by choosing the kernel
$k_{1-\alpha}(t-\tau)=\frac{1}{\Gamma(1-\alpha)}(t-\tau)^{-\alpha}$
and the $p$-set $P_1$ to be $P_1=\langle a,t,b,1,0\rangle$.
Indeed, in this case the operator $-A_{P^*}^\alpha$ becomes the right
Riemann--Liouville fractional derivative, and the operator $B_P^\alpha$ becomes
the left Caputo fractional derivative.
\end{proof}

\begin{remark}
If functionals \eqref{eq:IsoFunct1} and \eqref{eq:IsoConstr1}
do not depend on integer derivatives, then problem
\eqref{eq:IsoFunct1}--\eqref{eq:IsoConstr1} is reduced
to extremize functional
\begin{equation*}
\mathcal{J}[y]=\int\limits_a^b
F\left(t,y(t),B_{P_1}^\alpha y(t),K_{P_2}^\beta y(t)\right)dt
\end{equation*}
subject to boundary conditions
$y(a)=y_a$, $y(b)=y_b$,
and the isoperimetric constraint
\begin{equation*}
\mathcal{I}[y]=\int\limits_a^b
G\left(t,y(t),B_{P_1}^\alpha y(t),K_{P_2}^\beta y(t)\right)dt.
\end{equation*}
By \eqref{eq:eqEL2} there exists $\lambda$ such that $y$ satisfies
$\partial_2 H -A_{P_1^*}^\alpha\partial_4 H
+K_{P_2^*}^\beta\partial_5 H=0$,
$t\in[a,b]$, with $H=F-\lambda G$.
\end{remark}

\begin{remark}
Theorem~\ref{theorem:EL2} can be extended to the case when
$y$ is an extremal for $\mathcal{I}$. The proof is similar but one needs
to use the extended (abnormal) Lagrange multiplier rule.
The method is given in \cite{isoNabla}.
\end{remark}


\subsection{The coherence embedding problem}
\label{sec:coh}

The notion of embedding introduced in \cite{cd} is an algebraic procedure
providing an extension of classical differential equations over
an arbitrary vector space. This formalism is developed in the framework
of stochastic processes \cite{cd}, non-differentiable functions \cite{cft},
discrete sets \cite{BCGI}, and fractional equations \cite{Cresson}.
The general scheme of embedding theories is the following:
(i) fix a vector space $V$ and a mapping
$\iota : C^0 ([a,b] ,\mathbb{R}^n ) \rightarrow V$;
(ii) extend differential operators over $V$;
(iii) extend the notion of integral over $V$.
Let $(\iota , D,J)$ be a given embedding formalism, where a linear operator
$D : V \rightarrow V$ takes place for a generalized derivative on $V$, and
a linear operator $J: V \rightarrow \mathbb{R}$ takes place for a generalized
integral on $V$. The embedding procedure gives two different ways, a priori, to generalize
Euler--Lagrange equations. The first (pure algebraic) way is to make a direct embedding
of the Euler--Lagrange equation. The second (analytic) is to embed the Lagrangian
functional associated to the equation and to derive, by the associated calculus of variations,
the Euler--Lagrange equation for the embedded functional. A natural
question is then the problem of coherence between these two extensions:

{\sc Coherence problem}.
{\it Let $(\iota , D,J)$ be a given embedding formalism.
Do we have equivalence between the Euler--Lagrange equation which gives the direct embedding
and the one received from the embedded Lagrangian system?}

For the standard fractional differential calculus of Riemann--Liouville or Caputo,
the answer to the question above is known to be negative.
For a gentle explanation of the fractional embedding and its importance,
we refer the reader to \cite{Cresson,PhDInizan,klimek}.
Here we propose a coherent embedding in the framework of our fractional generalized
calculus by choosing the generalized fractional operator to be $K_P^\alpha$
with $q=-p$. A direct embedding of the classical
Euler--Lagrange equation \eqref{eq:CEL} gives
\begin{equation}
\label{eq:d:e}
\partial_2 F \left(t,y(t),K_P^\alpha y(t)\right)
-K_{P}^{\alpha} \partial_3 F\left(t,y(t),K_P^{\alpha} y(t)\right)=0
\end{equation}
for $t\in[a,b]$. On the other hand, we can apply Theorem~\ref{theorem:ELCaputo}
to the embedded Lagrangian functional
$\mathcal{J}[y]=\int\limits_a^b F\left(t,y(t),K_P^{\alpha} y(t)\right)dt$.
Let $P=\langle a,t,b,p,-p\rangle$ and $\alpha\in(0,1)$.
If $y$ is a solution to the problem
\begin{equation}
\label{eq:prb:coh}
\begin{gathered}
\mathcal{J}[y]=\int\limits_a^b F\left(t,y(t),K_P^{\alpha} y(t)\right)dt
\longrightarrow \textrm{extr},\\
y(a)=y_a, \quad y(b)=y_b,
\end{gathered}
\end{equation}
then, by Theorem~\ref{theorem:ELCaputo},
$y$ satisfies the Euler--Lagrange equation given by
\begin{equation}
\label{eq:1}
\partial_2 F \left(t,y(t),K_P^{\alpha} y(t)\right)
+K_{P^*}^{\alpha}\partial_3 F\left(t,y(t),K_P^{\alpha} y(t)\right)=0,
\end{equation}
$t\in[a,b]$. For an arbitrary kernel $k_\alpha$, an easy computation shows that
for $p = -q$ one has $K_P^{\alpha} f(t)=-K_{P^*}^{\alpha} f(t)$.
Therefore, equation \eqref{eq:1} can be written in the form
\begin{equation}
\label{eq:lap}
\partial_2 F \left(t,y(t),K_P^{\alpha} y(t)\right)
-K_{P}^{\alpha}\partial_3 F\left(t,y(t),K_P^{\alpha} y(t)\right)=0,
\end{equation}
$t\in[a,b]$. It means that the Euler--Lagrange equation \eqref{eq:d:e}
obtained by the direct fractional embedding procedure
and the Euler--Lagrange equation \eqref{eq:lap} obtained by the
least action principle coincide. We just proved the following result.

\begin{theorem}
\label{thm:coherent}
Let $k_\alpha(t,\tau)$ be an arbitrary kernel
and $P$ a $p$-set with $q=-p$:
$P=\langle a,t,b,p,-p\rangle$. Then the fractional variational
problem \eqref{eq:prb:coh} is coherent.
\end{theorem}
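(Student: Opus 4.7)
The plan is to verify coherence by computing both sides separately and showing they coincide precisely when $q=-p$. First, I would write out the \emph{direct embedding} of the classical Euler--Lagrange equation \eqref{eq:CEL}: replacing the ordinary derivative $y'$ by the generalized operator $K_P^{\alpha}y$ throughout \eqref{eq:CEL} (including in the outer derivative applied to $\partial_3 F$) yields exactly \eqref{eq:d:e}.

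Next, I would derive the \emph{analytic} Euler--Lagrange equation from the least action principle. The functional in \eqref{eq:prb:coh} is a special case of \eqref{eq:31} in which $F$ depends neither on $y'$ nor on the $B$-op, and where $K_P^\alpha$ plays the role of $K_{P_2}^\beta$ (with no $\partial_4 F$ contribution). Applying Theorem~\ref{theorem:ELCaputo} in this reduced setting collapses the general Euler--Lagrange equation \eqref{eq:eqELCaputo} to \eqref{eq:1}, namely
\begin{equation*}
\partial_2 F\left(t,y(t),K_P^{\alpha} y(t)\right)
+K_{P^*}^{\alpha}\partial_3 F\left(t,y(t),K_P^{\alpha} y(t)\right)=0.
\end{equation*}

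The remaining step, which is really the crux, is the algebraic identity $K_{P^*}^{\alpha} f = -K_P^{\alpha} f$ whenever $q=-p$. This follows directly from the definition of the $K$-op: $K_P^{\alpha}f$ is a linear combination of $\int_a^t k_\alpha(t,\tau)f(\tau)\,d\tau$ with coefficient $p$ and $\int_t^b k_\alpha(\tau,t)f(\tau)\,d\tau$ with coefficient $q$, while $K_{P^*}^{\alpha}$ swaps the coefficients. Thus $P = \langle a,t,b,p,-p\rangle$ gives $P^* = \langle a,t,b,-p,p\rangle$, so $K_{P^*}^{\alpha} = -K_{P}^{\alpha}$ as operators on whatever class of $f$ we are working with. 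I would just note this directly; no subtle analysis is required.

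Finally, substituting $K_{P^*}^{\alpha}\partial_3 F = -K_{P}^{\alpha}\partial_3 F$ into \eqref{eq:1} turns it into \eqref{eq:lap}, which is identical to the direct-embedding equation \eqref{eq:d:e}. Since any admissible $y$ satisfying one equation satisfies the other, the two procedures (direct embedding of the Euler--Lagrange equation, and least-action principle applied to the embedded Lagrangian) yield the same necessary condition, which is precisely the definition of coherence for the formalism. The only potential obstacle is confirming that the sign conventions in $P^*$ match what is needed; this is a bookkeeping check on the dual $p$-set, not a real difficulty, since $P^{**}=P$ and the $K$-op depends linearly on $(p,q)$.
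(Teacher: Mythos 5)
Your proposal is correct and follows essentially the same route as the paper: the paper also obtains \eqref{eq:1} by applying Theorem~\ref{theorem:ELCaputo} to the embedded functional, observes that $K_{P^*}^{\alpha}f=-K_{P}^{\alpha}f$ for an arbitrary kernel when $q=-p$, and concludes that \eqref{eq:lap} coincides with the directly embedded equation \eqref{eq:d:e}. Your sign bookkeeping on the dual $p$-set is exactly the paper's ``easy computation.''
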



\section{Illustrative examples}
\label{sec:ex}

In this section we illustrate our results through two examples
of isoperimetric problems with different kernels.
Explicit expressions for the minimizers are given.

In Example~\ref{ex:1} we make use of the Mittag--Leffler function
of two parameters. Let $\alpha, \beta>0$. We recall that
the Mittag--Leffler function is defined by
\begin{equation*}
E_{\alpha,\beta}(z)
=\sum_{k=0}^\infty\frac{z^k}{\Gamma(\alpha k+\beta)}\, .
\end{equation*}
This function appears naturally in the solution
of fractional differential equations,
as a generalization of the exponential function
\cite{CapelasOliveira}.
Indeed, while a linear second order
ordinary differential equation
with constant coefficients presents an exponential function as solution,
in the fractional case the Mittag--Leffler functions emerge \cite{book:Kilbas}.

\begin{example}
\label{ex:1}
Let $\alpha\in\left(0,1\right)$ and $\xi \in\mathbb{R}$.
Consider the following problem:
\begin{equation}
\label{eq:ex}
\begin{gathered}
\mathcal{J}(y)=\int_0^1\left(y'
+ \, {\textsl{B}_P^\alpha} y\right)^2dt \longrightarrow \min,\\
\mathcal{I}(y)=\int_0^1\left(y'
+ \, {\textsl{B}_P^\alpha} y\right)dt = \xi,\\
y(0)=0, \quad
y(1)=\int_0^1 E_{1-\alpha,1}\left(
-\left(1-\tau\right)^{1-\alpha}\right) \xi d\tau,
\end{gathered}
\end{equation}
where $k_{1-\alpha}(t-\tau)=\frac{1}{\Gamma(1-\alpha)}(t-\tau)^{-\alpha}$
and $P=\langle 0,t,1,1,0\rangle$. In this case the $B$-op
becomes the left Caputo fractional derivative,
and the augmented Lagrangian $H$ of Theorem~\ref{theorem:EL2} is given by
$H(t,y,v,w) =(v+w)^2 -\lambda (v+w)$. One can easily check that
\begin{equation}
\label{eq:y:ex}
y(t)=\int_0^t E_{1-\alpha,1}\left(-\left(t
-\tau\right)^{1-\alpha}\right)\xi d\tau
\end{equation}
is not an extremal for $\mathcal{I}$
and satisfies $y'+\,{\textsl{B}_P^\alpha} y= \xi$.
Moreover, \eqref{eq:y:ex} satisfies
\eqref{eq:eqEL2} for $\lambda=2\xi$, \textrm{i.e.},
\begin{equation*}
-\frac{d}{dt}\left(2\left(y'+ \,{\textsl{B}_P^\alpha} y\right) -2\xi\right)
- \, {\textsl{A}_{P^*}^\alpha}\left(2\left(y'
+ \, {\textsl{B}_P^\alpha} y\right) -2\xi\right)=0,
\end{equation*}
where $P^*=\langle 0,t,1,0,1\rangle$ is the dual $p$-set
of $P$. We conclude that \eqref{eq:y:ex}
is an extremal for problem \eqref{eq:ex}.
Since in this example one has a
problem \eqref{eq:IsoFunct1}--\eqref{eq:IsoConstr1}
with $F(t,y,v,w) = (v+w)^2$ and $G(t,y,v,w) = v+w$, simple
convexity arguments show (see \cite[Section~6]{Integrals}
and \cite[Section~3.4]{Isoperimetric}) that \eqref{eq:y:ex}
is indeed the global minimizer to problem \eqref{eq:ex}.
\end{example}

\begin{example}
Let $\alpha\in\left(0,1\right)$,
$P=\langle 0,t,1,1,0\rangle$.
Consider the following problem:
\begin{equation*}
\begin{gathered}
\mathcal{J}(y)=\int_0^1\left({\textsl{K}_P^\alpha} y
+t\right)^2dt \longrightarrow \min,\\
\mathcal{I}(y)=\int_0^1 t{\textsl{K}_P^\alpha} ydt = \xi,\\
y(0)=\xi-1, \quad y(1)=(\xi-1)\left(1
+\int_0^1 r_{\alpha}(1-\tau) d\tau\right),
\end{gathered}
\end{equation*}
where the kernel is such that $k_\alpha(t,\tau)=k_\alpha(t-\tau)$
with $k_\alpha(0)=1$ and $\textsl{K}_{P^*}^\alpha t\neq 0$.
The resolvent $r_{\alpha}(t)$ is given by $r_{\alpha}(t)
=\mathcal{L}^{-1}\left[\frac{1}{s\widetilde{k}_\alpha(s)}-1\right]$,
$\widetilde{k}_\alpha(s)=\mathcal{L}\left[k_\alpha(t)\right]$, where
$\mathcal{L}$ and $\mathcal{L}^{-1}$ are the direct and inverse Laplace transforms,
respectively. Since $\textsl{K}_{P^*}^\alpha t\neq 0$, there is no solution
to the Euler--Lagrange equation for functional $\mathcal{I}$.
The augmented Lagrangian $H$ of Theorem~\ref{theorem:EL2} is given by
$H(t,y,w) =(w+t)^2 -\lambda tw$. Function
\begin{equation*}
y(t) = \left(\xi-1\right) \left( 1 +\int_0^t r_{\alpha}(t-\tau)d\tau \right)
\end{equation*}
is the solution to the Volterra integral equation of the first kind
$\textsl{K}_{P}^\alpha y=(\xi-1)t$ (see, \textrm{e.g.}, Eq. 16, p.~114
of \cite{book:Polyanin}) and for $\lambda=2\xi$ satisfies
our optimality condition \eqref{eq:eqEL2}:
\begin{equation}
\label{eq:noc:ex2}
\textsl{K}_{P^*}^\alpha\left(2\left(\textsl{K}_P^\alpha y+t\right)
-2\xi t\right)=0.
\end{equation}
The solution of \eqref{eq:noc:ex2} subject to the given boundary conditions
depends on the particular choice for the kernel. For example, let
$k_{\alpha}(t-\tau)=e^{\alpha(t-\tau)}$. Then the solution of \eqref{eq:noc:ex2}
subject to the boundary conditions $y(0)=\xi-1$ and $y(1)=(\xi-1)(1-\alpha)$
is $y(t)=(\xi-1)(1-\alpha t)$ (\textrm{cf.} \cite[p.~15]{book:Polyanin}).
If $k_{\alpha}(t-\tau)=\cos\left(\alpha(t-\tau)\right)$, then the boundary
conditions are $y(0)=\xi-1$ and $y(1)=(\xi-1)\left(1+\alpha^2/2\right)$,
and the extremal is $y(t)=(\xi-1)\left(1+\alpha^2 t^2/2\right)$
(\textrm{cf.} \cite[p.~46]{book:Polyanin}).
\end{example}

Borrowing different kernels from book \cite{book:Polyanin},
many other examples of dynamic optimization problems can be
explicitly solved by application of the results of Section~\ref{sec:mr}.


\section*{Acknowledgements}

Work supported by {\it FEDER} funds through
{\it COMPETE} --- Operational Programme Factors of Competitiveness
(``Programa Operacional Factores de Competitividade'')
and by Portuguese funds through the
{\it Center for Research and Development
in Mathematics and Applications} (University of Aveiro)
and the Portuguese Foundation for Science and Technology
(``FCT --- Funda\c{c}\~{a}o para a Ci\^{e}ncia e a Tecnologia''),
within project PEst-C/MAT/UI4106/2011
with COMPETE number FCOMP-01-0124-FEDER-022690.
Odzijewicz was also supported by FCT through the Ph.D. fellowship
SFRH/BD/33865/2009; Malinowska by Bia{\l}ystok
University of Technology grant S/WI/02/2011
and by the European Union Human Capital Programme
\emph{Podniesienie potencja{\l}u uczelni wyzszych jako czynnik rozwoju gospodarki opartej na wiedzy};
and Torres by FCT through the project PTDC/MAT/113470/2009.



\end{document}